\newcommand{\R}{\ensuremath{\mathbb{R}}}
\newcommand{\C}{\ensuremath{\mathbb{C}}}
\newcommand{\e}{\varepsilon}
\newcommand{\sgn}{\mbox{\normalfont sgn}}
\newtheorem {theorem} {Theorem}
\newtheorem* {lemma}  {Fundamental Lemma}
\newtheorem {remark} {Remark}
\begin{document}

\title[Bifurcation of limit cycles from a non-smooth cylinder]{Bifurcation of limit cycles from a non-smooth perturbation of a two-dimensional isochronous cylinder}

\author[Buzzi, C.A., Euz\'{e}bio, R.D. and Mereu, A.C.]
{Claudio A. Buzzi$^{1}$, Rodrigo D. Euz\'{e}bio$^{1}$ and Ana C. Mereu$^2$}

\address{$^1$ Department of Mathematics,
IBILCE - UNESP Univ Estadual Paulista, Rua Cristov\~{a}o Colombo 2265, Jardim Nazareth, CEP
15.054--000, Sao Jos\'e de Rio Preto, SP, Brazil}

\address{$^2$ Department of Physics, Chemistry and Mathematics.
 UFSCar. 18052-780, Sorocaba, SP, Brazil}

\subjclass[2010]{}

\keywords{limit cycles, non-smooth vector fields, Malkin's bifurcation function}

\date{}
\dedicatory{}

\begin{abstract}
Detect the birth of limit cycles in non-smooth vector fields is a very important matter into the recent theory of dy\-na\-mi\-cal systems and applied sciences. The goal of this paper is to study the bifurcation of limit cycles from a continuum of periodic orbits filling up a two-dimensional isochronous cylinder of a vector field in $\mathbb{R}^{3}$. The approach involves the regularization process of non-smooth vector fields and a method based in the Malkin's bifurcation function for $C^{0}$ perturbations. The results provide sufficient conditions in order to obtain limit cycles emerging from the cylinder through smooth and non-smooth perturbations of it. To the best of our knowledge they also illustrate the implementation by the first time of a new method based in the Malkin's bifurcation function. In addition, some points concerning the number of limit cycles bifurcating from non-smooth perturbations compared with smooth ones are studied. In summary the results yield a better knowledge about limit cycles in non-smooth vector fields in $\mathbb{R}^{3}$ and explicit a manner to obtain them by performing non-smooth perturbations in codimension one Euclidean manifolds.
\end{abstract}

\maketitle


\bigskip

\section{Introduction}\label{secao introducao}

\subsection{Setting the problem}\label{secao colocacao do problema}

Non-smooth vector fields have become
certainly one of the common frontiers between Ma\-the\-matics and
Phy\-sics or Engineering.
Many authors have contributed to the study of non-smooth vector fields  (see
for instance the pioneering work \cite{Fi} or the didactic works \cite{diBernardo-livro,Marco-enciclopedia}, and references therein about details of
these multi-valued vector fields).  In our approach Filippov's convention is considered, see \cite{Fi}. So, the vector field of the model is non-smooth across a \textit{switching manifold} and it is possible for its trajectories to be
confined onto the switching manifold itself. The occurrence of such
behavior, known as \textit{sliding motion}, has been reported in a
wide range of applications. We can find important examples in electrical circuits having switches, in mechanical devices in which components collide into each other, in problems with friction, sliding or squealing, among others (see \cite{diBernardo-livro}).

\smallskip

This work concerns with the existence of limit cycles emerging from a continuum of periodic solutions filling up a two dimensional cylinder via a non-smooth perturbation. Such kind of problems are closed related to the weakest version of the famous 16th Hilbert's problem proposed by Arnol'd (see \cite{ARNOLD1} and \cite{ARNOLD2}). Arnol'd asked about the number of limit cycles bifurcating from the perturbation of a center and up to now many authors have contributed with this subject. However, the problems of perturbation of a submanifold filled up by periodic solutions which appears in the literature are usually restricted to the plane. In our opinion the perturbation of other kind of  two-dimensional manifolds has been poorly treated in the literature, and this is the goal of this paper.

\smallskip

Recently in \cite{LT} the authors investigated the problem of perturbation of a two-dimensional cylinder filled up by periodic solutions in $\mathbb{R}^{3}$ by a smooth function. In their paper, the authors illustrated the implementation of a method based in the averaging theory for computing the limit cycles bifurcating from a continuum of periodic solutions occupying a cylinder. Other papers with similar approaches can be found in \cite{LPT1} and \cite{LPT2}.

\smallskip

In this paper the goal is to generalize the study presented in \cite{LT} for a biggest class of cylinders and also take into account non-smooth perturbations. We stress out that this is not the situation considered in paper \cite{LT}. We consider the differential system
\begin{equation}\label{system_no_perturbation}
\begin{array}{l}
\dot{x}=-y+x(x^2+y^2-1),\vspace{0.2cm}\\
\dot{y}=x+y(x^2+y^2-1),\vspace{0.2cm}\\
\dot{z}=h(x,y).
\end{array}
\end{equation}

Observe that once the function $h(x,y)$ does not depend on $z$, the cylinder $\textsl{C}=\{(x,y,z)\in \R^3:x^2+y^2 =1\}$ is an invariant set for system \eqref{system_no_perturbation}. The solution passing through the point $(\cos \theta_0, \sin \theta_0,z_0)\in \textsl{C}$ at time $t=0$  is $x(t)= \cos(t+\theta_0)$, $ y(t)=\sin(t+\theta_0)$ and
\begin{equation}\label{solution}
 z(t)=z_0+\displaystyle\int_{0}^{t}h(\cos(s+\theta_{0}),\sin(s+\theta_{0}))ds.
\end{equation}

Consequently the solutions on the cylinder $\textsl{C}$ are periodic if the last integral is periodic. In order to verify such property about this integral, we must impose some conditions on the function $h$. Otherwise, the cylinder is invariant but not filled up with periodic orbits. Indeed, we will consider the functions $h$ which can be written into the form $h(x,y)=\rho(x^2+y^2)\overline{h}(x,y)$, where $\overline{h}(x,y)=\textstyle\sum_{i+j\geq 1}a_{ij}x^{i}y^{j}$. Then we will achieve conditions on the natural values $i$ and $j$ for which
\begin{equation}\label{integral}
\displaystyle\int_{0}^{t}h(\cos s,\sin s)ds=\rho(1)\cdot\displaystyle\sum_{i+j\geq 1}a_{ij}\displaystyle\int_{0}^{t}\cos^{i}s\,\sin^{j}s \,ds,
\end{equation}
is periodic, when now we take $\theta_{0}=0$ in order to simplify the expressions. The expression into the integral takes the following form
$$
\cos^{i}s\,\sin^{j}s=\displaystyle\sum_{m=0}^{\left[\frac{i+j}{2}\right]}c_{m}\cos((i+j-2m)s),
$$
or
$$
\cos^{i} s\,\sin^{j} s=\displaystyle\sum_{m=0}^{\left[\frac{i+j}{2}\right]}d_{m}\sin((i+j-2m)s),
$$
if $i+j$ is even or odd, respectively (see  \cite{BLMT}). Using the formulae below and a table of integrals one can see that in both cases the integral are periodic unless $i+j-2m=0$ when $j$ is even. Indeed, in such case the cosine of the first expression provide a constant term which is not periodic after integration. However, the condition $i+j-2m=0$ when $j$ is even implies that $i$ is also even. Then in order to live the last integral of equality \eqref{integral} periodic we must impose that $i$ and $j$ can not be even simultaneously. Moreover, it is not difficult to see that $\overline{h}(x,y)$ can be put into the following form
$$
\overline{h}(x,y)=h_{1}(x^{2},y^{2})+x\,h_{2}(x^{2},y^{2})+xy\,h_{3}(x^{2},y^{2})+y\,h_{4}(x^{2},y^{2}).
$$

Hence, since the power of $x$ and $y$ can not be even simultaneously, we are interested in the class of functions presenting the form $\tilde{h}(x,y)=x\,\phi(x^{2},y^{2})+xy\,\chi(x^{2},y^{2})+y\,\psi(x^{2},y^{2})$. Therefore, since the periodic orbits live on the cylinder $\textsl{C}$, we will take into account that the functions $h(x,y)=\rho(x^2+ y^2)\tilde{h}(x,y)$ satisfying the condition $\rho(r^2\cos^2\theta+r^2\sin^2\theta)=\rho(1)$ for $r=1$ in polar coordinates.


\smallskip

In this paper we perform a non-smooth perturbation in system \eqref{system_no_perturbation}. It means that we consider two special perturbations of system \eqref{system_no_perturbation} depending on the region of $\mathbb{R}^{3}$, which lead us to a non-smooth system. The results are obtained by using the Malkin's bifurcation function (see \cite{BLM}) after the performing of a regularization of such non-smooth system. We stress out that apart from the results presented in this paper, it has an especial importance because we exhibit a thoroughly implementation of the method presented in \cite{BLM}. As far as the authors know there is no other examples of implementation of this method in the literature.

\smallskip

In what follows, in Subsection \ref{methods}, we present the methods and tools that will be used in this paper. In Subsection \ref{preliminares} we introduce the objects that we will needed in order to state the results. Next, in Subsection \ref{main results} we state the results. In Subsection \ref{provas} we prove the results and later, in Subsection \ref{exemplos}, we present a particular example and briefly discuss some differences between performing smooth and non-smooth perturbation in system \eqref{system_no_perturbation}.

\subsection{Introducing the tools and methods}\label{methods}

Let $V$ be an arbitrarily small neighborhood of $0\in\R^2$ and consider a codimension one manifold $\Sigma$ of $\R^2$ given by
$\Sigma =f^{-1}(0),$ where $f:V\rightarrow \R$ is a smooth function
having $0\in \R$ as a regular value (i.e. $\nabla f(p)\neq 0$, for
any $p\in f^{-1}({0}))$. We call $\Sigma$ the \textit{switching
manifold} that is the separating boundary of the regions
$\Sigma^+=\{q\in V \, | \, f(q) \geq 0\}$ and $\Sigma^-=\{q \in V \,
| \, f(q)\leq 0\}$. Observe that we can assume, locally around the origin of $\R^2$, that
$f(x,y)=y$. Moreover, designate by $\chi$ the space of C$^r$-vector fields on
$V\subset\R^2$, with $r \geq 1$
large enough for our purposes. Call $\Omega$ the space of vector
fields $Z: V \rightarrow \R ^{2}$ such that
\begin{equation}\label{eq Z}
 Z(x,y)=\left\{\begin{array}{l} X(x,y),\quad $for$ \quad (x,y) \in
\Sigma^+,\\ Y(x,y),\quad $for$ \quad (x,y) \in \Sigma^-,
\end{array}\right.
\end{equation}
where $X=(X_1,X_2) , Y = (Y_1,Y_2) \in \chi$.  The trajectories of
$Z$ are solutions of  ${\dot q}=Z(q)$ and we accept it to be
multi-valued at points of $\Sigma$. The basic results of
differential equations in this context were stated by Filippov in
\cite{Fi}, that we summarize next. Indeed, consider Lie derivatives \[X.f(p)=\left\langle \nabla f(p),
X(p)\right\rangle \;\; \mbox{ and } \;\; X^i.f(p)=\left\langle
\nabla X^{i-1}. f(p), X(p)\right\rangle, i\geq 2
\]
where $\langle . , . \rangle$ is the usual inner product in $\R^2$.

\smallskip

We  distinguish the following regions on the discontinuity set
$\Sigma$:
\begin{itemize}
\item [(i)]$\Sigma^c\subseteq\Sigma$ is the \textit{sewing region} if
$(X.f)(Y.f)>0$ on $\Sigma^c$ .
\item [(ii)]$\Sigma^e\subseteq\Sigma$ is the \textit{escaping region} if
$(X.f)>0$ and $(Y.f)<0$ on $\Sigma^e$.
\item [(iii)]$\Sigma^s\subseteq\Sigma$ is the \textit{sliding region} if
$(X.f)<0$ and $(Y.f)>0$ on $\Sigma^s$.
\end{itemize}

In this paper we consider a plane separating the cylinder $\textsl{C}$ into two parts in order to perturb each one into two different functions. Nevertheless, due to the arrangement of $\textsl{C}$, we will take $y=0$ as this plane in such way that each orbit on the cylinder intersects $\Sigma$ transversally in two distinct points. It is clear that the switching manifold in this case is given by $\Sigma=F^{-1}({0})$ where $F(x,y,z)=y$. Note that the intersection of the cylinder $\textsl{C}$ with $\Sigma$ are the straight lines $x=\pm 1$; we note also that $\Sigma$ separates $\textsl{C}$ in two connected components (see Figure \ref{cilindro}).

\begin{figure}[!h]
\begin{center}
\psfrag{S}{$\Sigma$}\psfrag{C}{$\textsl{C}$}\psfrag{x}{$x$}\psfrag{y}{$y$}\psfrag{z}{$z$}\psfrag{a}{$-1$}\psfrag{b}{$1$}\psfrag{c}{$1$}
 \epsfxsize=7cm
\epsfbox{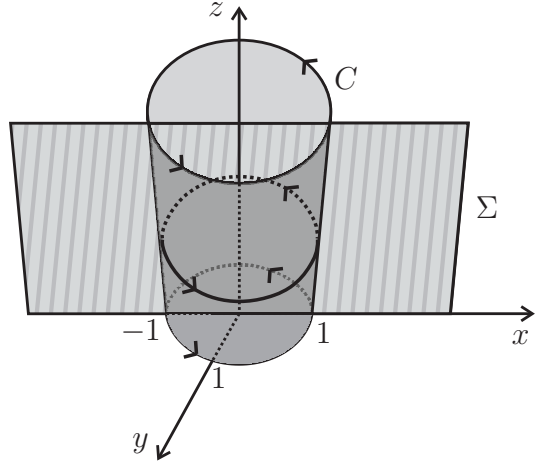} \caption{\small{The intersection between $\Sigma$ and $\textsl{C}$.}} \label{cilindro}\end{center}
\end{figure}

\smallskip

Now we perturb system \eqref{system_no_perturbation}. Taking into account the geometry of $\Sigma$, we will consider the polynomials $g^{\pm}=(p^{\pm},q^{\pm},r^{\pm})$ given by
\begin{equation}\label{perturbations}
\begin{array}{rcl}
p^{\pm}(x,y,z)&=&\displaystyle\sum_{i+j+k\leq m} a^{\pm}_{ijk}x^{i}y^{j}z^{k},\vspace{0.2cm}\\
q^{\pm}(x,y,z)&=&\displaystyle\sum_{i+j+k\leq n} b^{\pm}_{ijk}x^{i}y^{j}z^{k},\vspace{0.2cm}\\
r^{\pm}(x,y,z)&=&\displaystyle\sum_{i+j+k\leq p} c^{\pm}_{ijk}x^{i}y^{j}z^{k},
\end{array}
\end{equation}
with $i,j,k\in\mathbb{N}$ and $a_{ijk},b_{ijk},c_{ijk}\in\mathbb{R}$, $\forall i,j,k\in\mathbb{N}$. Moreover, consider the function
$$
g(x,y,z)=\dfrac{1}{2}(g^{+}(x,y,z)+g^{-}(x,y,z))+\dfrac{\sgn(y)}{2}(g^{+}(x,y,z)-g^{-}(x,y,z)),
$$
and observe that the expression of the function $g$ take different forms different depending on the signal of $y$, i.e., $g(p)=g^{+}(p)$ if $p\in\Sigma^{+}=\{y\geq 0\}$ and $g(p)=g^{-}(p)$ if $p\in\Sigma^{-}=\{y\leq 0\}$ for each $p\in\mathbb{R}^{3}$. Then, by performing a perturbation in system \eqref{system_no_perturbation} through the non-smooth function $g$ we obtain the non-smooth differential system
\begin{equation}\label{system_noncontinuous}
\dot{X_{\varepsilon}}=f(t,X)+\varepsilon g(X).
\end{equation}
where $f(t,X)$ is the vector field of system \eqref{system_no_perturbation}, $X=(x,y,z)$ and $\varepsilon$ is a small parameter.

\smallskip

Following the Filippov's convention, we have $XF(x,y,z)$ $=YF(x,$ $y,z)=y(x^2+y^2-1)+x$ when $\varepsilon=0$ and then $XF(\pm 1,0,z)\cdot$ $YF(\pm 1,0,$ $z)=1$
Therefore $\textsl{C}\cap\Sigma\subset\Sigma^{c}$. Also, if $|\varepsilon|\neq0$ is sufficiently small, the intersection $\textsl{C}\cap\Sigma$ still occurs in sewing points since the transversality of the solutions passing through sewing points is stable.

\smallskip

A powerful tool for study the perturbation of a continuum of periodic solutions as system \eqref{system_noncontinuous} is the averaging theory. Despite, in \cite{LNT} the authors exhibits a result based in the averaging theory where it is possible to consider non-smooth vector fields into the {\it standard form}, i.e., when $f(t,X)\equiv 0$. However, system \eqref{system_noncontinuous} is not in the standard form, then we can not apply the results of \cite{LNT}. In fact, once function $g$ in system \eqref{system_noncontinuous} is non-smooth, as far as the authors know there is no perturbation method in the literature that works out in this system. Nevertheless, in those cases where $g$ is $C^{0}$, we can apply a result based in the Malkin's bifurcation function presented in \cite{BLM}, even if the considered system is not in the standard form. This method is summarized in the following theorem.

\begin{theorem}\label{teotool}
Consider the $T$-periodic differential system

\begin{equation}\label{perturbado}
\dot{x}=f(t,x)+\e g(t,x,\e),
\end{equation}
where $f\in C^2(\R\times\R^n,\R^n)$ and $g\in C^0(\R\times\R^n\times[0,1],\R^n)$ are $T$-periodic in the first variable and $g$ is locally uniformly Lipschitz with respect to its second variable. For $z\in \R^n$ denote by $x(\cdot,z,\e)$ the solution of \eqref{perturbado} such that $x(0,z,\e)=z$. Assume that the unperturbed system

\begin{equation}\label{noperturbado}
\dot{x}=f(t,x)
\end{equation}
satisfies the following conditions.
\begin{itemize}
\item[i)] There exist an open ball $U\subset \R^k$ with $k\leq n$ and a function $\xi \in C^1(\overline{U},\R^n)$ such that for $h\in \overline{U}$ the $n\times k$ matrix $D\xi(z)$ has rank $k$ and $\xi(z)$ is the initial condition of a $T$-periodic solution of \eqref{noperturbado}.

\item[ii)] For each $h\in \overline{U}$ the linear system

\begin{equation}\label{linearizado}
\dot{y}=D_x f(t,x(t,z,0))y
\end{equation}
with $z=\xi(z)$ has the Floquet multiplier $+1$ with the geometric multiplicity equal to $k$.
\end{itemize}

Let $u_1(\cdot,z)$, ..., $u_k(\cdot,z)$ be linearly independent $T$-periodic solutions of the adjoint linear system

\begin{equation}\label{adjoint}
\dot{u}=-(D_x f(t,x(t,\xi(z),0)))^*u,
\end{equation}
such that $u_1(0,z)$, ..., $u_k(0,z)$ are $C^1$ with respect to $h$ and define the function $M:\overline{U}\rightarrow \R^k$ (called the Malkin's bifurcation function) by
$$M(z)=\displaystyle\int_0^T\left(\begin{array}{l}\left\langle u_1(s,z)\right.,  \left. g (s,x(s,\xi(z),0),0)\right\rangle \\
 \ \ \ \ \ \  \ \ \ \ \ \  \ \ \ \ \ \ ...\\
\left\langle u_k(s,z)\right., \left. g(s,x(s,\xi(z),0),0)\right\rangle
\end{array}\right)ds.
$$
Then the following statements hold.
\begin{itemize}
\item[1)] For any sequences $(\varphi_m)_{m\geq 1}$ from $C^0(\R,\R^n)$ and $(\e_m)_{m\geq 1}$ from $[0,1]$ such that $\varphi_m(0)\rightarrow \xi(z_0)\in \xi(\overline{U})$, $\e_m\rightarrow 0$ as $m\rightarrow \infty$ and $\varphi_m$ is a $T$-periodic solution of \eqref{perturbado} with $\e=\e_m$, we have that $M(z_0)=0$.
\item[2)] If $M(z)\neq 0$ for any $z\in \partial U$ and $d(M,U)\neq 0$, then there exists $\e_1>0$ sufficiently small such that for each $\e\in(0,\e_1]$ there is at least one $T$-periodic solution $\varphi_{\e}$ of system \eqref{perturbado} such that $\rho(\varphi_\e(0),\xi(\overline{U}))\rightarrow 0$ as $\e\rightarrow 0$, where $\rho(\varphi_\e(0),\xi(\overline{U}))= \min_{\zeta\in \xi(\overline{U})}\left\|\varphi_\e(0)-\zeta\right\|$ and $\left\|.\right\|$ is a norm in $\R^n$.
\end{itemize}
In addition we assume that there exists $z_0\in U$ such that $M(z_0)=0$, $M(z)\neq 0$ for all $z\in\overline{U}\backslash \{z_0\}$ and the Brouwer degree $d(M,U)$ of $M$ in $U$ satisfies $d(M,U)\neq 0$. Moreover, calling $w_{0}=\xi(z_0)$, we assume that:

\begin{itemize}
\item[iii)] For $\delta >0$ sufficiently small there exists $M_{\delta}\subset [0,T]$ Lebesgue measurable with $mes(M_{\delta})=\tilde{o}(\delta)$ such that
\begin{multline*}
\left\|g(t,w_1+\zeta,\e)-g(t,w_1,0)-g(t,w_2+\zeta,\e)+g(t,w_2,0)\right\|\\ 
\leq \tilde{o}(\delta)\left\|w_1-w_2\right\|,
\end{multline*}
for all $t\in [0,T]\backslash M_{\delta}$ and for all $w_1$, $w_2 \in B_{\delta}(w_{0})$, $\e\in [0,\delta]$ and $\zeta\in B_{\delta}(0)$.
\item[iv)] There exists $\delta_1>0$ and $L_M>0$ such that
\[\left\|M(z_1)-M(z_2)\right\|\geq L_M \left\|z_1-z_2\right\|, \ \mbox{for all} \ z_1, z_2 \in B_{\delta_1}(z_0).\]
\end{itemize}
Then the following conclusion holds.
\begin{itemize}
\item[3)] There exists $\delta_2>0$ such that for any $\e\in(0,\e_1]$, $\varphi_{\e}$ is the only $T$-periodic solution of \eqref{perturbado} with initial condition in $B_{\delta_2}(w_0)$. Moreover $\varphi_{\e}(0)\rightarrow \xi(z_0)$ as $\e\rightarrow 0$.
\end{itemize}
\end{theorem}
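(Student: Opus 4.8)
The plan is to reduce the search for $T$-periodic solutions of \eqref{perturbado} to a finite-dimensional bifurcation equation via a Lyapunov--Schmidt procedure, and then to analyse that equation with Brouwer degree theory. First I would introduce the displacement map $Q(z,\e)=x(T,z,\e)-z$, whose zeros are exactly the initial conditions of $T$-periodic solutions of \eqref{perturbado}. Condition i) gives $Q(\xi(z),0)=0$ for every $z\in\overline{U}$, so the unperturbed zero set is the $k$-dimensional manifold $\xi(\overline{U})$. Writing $\Phi(t,z)$ for the fundamental matrix of the variational system \eqref{linearizado} with $\Phi(0,z)=I$, one has $D_zQ(\xi(z),0)=\Phi(T,z)-I$. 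Condition ii) says that $+1$ is a Floquet multiplier of $\Phi(T,z)$ with geometric multiplicity $k$, so this operator has a $k$-dimensional kernel and, by the Fredholm alternative, a $k$-dimensional cokernel spanned by the values $u_1(0,z),\dots,u_k(0,z)$ of the adjoint solutions of \eqref{adjoint}. This furnishes the splitting on which the reduction rests.

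Next I would expand $Q$ to first order in $\e$. Because $g$ is only $C^0$ but locally uniformly Lipschitz, the solution $x(\cdot,z,\e)$ still depends continuously on $\e$ and admits a first variation; by variation of parameters, $\partial_\e x(T,z,0)=\Phi(T,z)\int_0^T\Phi(s,z)^{-1}g(s,x(s,z,0),0)\,ds$. Projecting this onto the cokernel directions $u_1(0,z),\dots,u_k(0,z)$ and using that the columns of $(\Phi(\cdot,z)^{-1})^*$ are precisely the adjoint solutions of \eqref{adjoint}, the projected displacement equals $\e\,M(z)+o(\e)$, where $M$ is the Malkin function of the statement. This identity immediately yields conclusion 1): if $\varphi_m$ are $T$-periodic solutions of \eqref{perturbado} with $\e_m\to0$ and $\varphi_m(0)\to\xi(z_0)$, then the projected equation forces $M(z_0)=0$ in the limit.

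For conclusion 2) I would solve the complementary (range) part of $Q=0$ by the implicit function theorem in its Lipschitz form, since $\Phi(T,z)-I$ is invertible there; this expresses the transverse coordinates as a continuous function of the $k$ bifurcation coordinates and reduces $Q=0$ to a finite-dimensional equation $B(z,\e)=0$ on $U$ with $B(z,\e)=\e M(z)+o(\e)$ uniformly on $\overline{U}$. The hypotheses $M\neq0$ on $\partial U$ and $d(M,U)\neq0$ then give, by homotopy invariance of the Brouwer degree, a zero of $B(\cdot,\e)$ in $U$ for all small $\e$, hence a $T$-periodic solution $\varphi_\e$ with $\rho(\varphi_\e(0),\xi(\overline{U}))\to0$. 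Finally, for conclusion 3) I would invoke conditions iii) and iv): condition iv) provides a lower Lipschitz bound making the leading term $M$ locally injective near $z_0$, while condition iii), with its exceptional set $M_\delta$ of small measure, controls the $\e$-error of $B$ uniformly despite the switching; together they force $B(\cdot,\e)$ to be injective on a small ball, giving uniqueness of $\varphi_\e$ in $B_{\delta_2}(w_0)$ and the convergence $\varphi_\e(0)\to\xi(z_0)$.

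The main obstacle is the low regularity of $g$. All the classical steps---differentiability of the Poincar\'e map in $\e$, the implicit function theorem, and the error estimates in the degree and uniqueness arguments---presuppose smoothness that is absent here. The role of the measurable set $M_\delta$ with $mes(M_\delta)=\tilde o(\delta)$ in condition iii) is exactly to absorb the discontinuities of $g$ along the switching, so that the expansion $B(z,\e)=\e M(z)+o(\e)$ and its Lipschitz control survive; making these uniform estimates rigorous under merely $C^0$, locally Lipschitz data is where the real work lies.
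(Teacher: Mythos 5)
First, a point of reference: the paper does not prove Theorem \ref{teotool} at all. It is quoted as an external tool, and the authors explicitly refer to \cite{BLM} (Theorem 7, p.~3916) for its proof. So there is no internal proof to measure your attempt against; the only meaningful comparison is with the argument in \cite{BLM}.

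Your outline does capture the standard strategy behind results of this type, and the individual identifications are correct: the zeros of the displacement map $Q(z,\e)=x(T,z,\e)-z$ are the initial conditions of $T$-periodic orbits; condition ii) makes $\Phi(T,z)-I$ have a $k$-dimensional kernel with cokernel annihilated exactly by the initial values of the periodic adjoint solutions; and the projection of the first-order term $\Phi(T,z)\int_0^T\Phi(s,z)^{-1}g\,ds$ onto those directions is precisely $M(z)$, using $\Phi(T,z)^*u_i(0,z)=u_i(0,z)$ and $(\Phi(s,z)^{-1})^*u_i(0,z)=u_i(s,z)$. That yields conclusion 1) essentially as you describe, modulo justifying the first variation in $\e$ for $g$ that is only continuous and Lipschitz in $x$. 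The genuine gap is that for conclusions 2) and 3) your text is a plan rather than a proof: the uniform expansion $B(z,\e)=\e M(z)+o(\e)$ on $\overline U$, the solvability of the range equation by a ``Lipschitz implicit function theorem'', and the uniqueness argument combining iii) and iv) are exactly the places where the low regularity of $g$ bites, and you concede in your final paragraph that these estimates are left unproved. In \cite{BLM} those steps are the substance of the proof: the authors do not perform a clean Lyapunov--Schmidt splitting but work with the Brouwer degree of an auxiliary map on a full neighborhood of $\xi(\overline U)$ in $\R^n$, together with Gronwall-type estimates in which the exceptional set $M_{\delta}$ of condition iii) is what controls the error contributed by the set where $g$ fails to be differentiable. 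As written, your proposal establishes the shape of the argument but not the theorem.
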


The proof of Theorem \ref{teotool} can be found in \cite{BLM} (Theorem 7, pag. 3916). For the case where $f\equiv0$ a similar result for $C^{0}$ functions using Brouwer degree can be found in \cite{BL}.

\begin{remark}\label{iii-v}
Since condition $(iii)$ is rather technical, instead of use it, in this paper we consider a simpler condition for the function $g$, as follows:
\begin{itemize}
\item[v)] For any $\lambda>0$ sufficiently small there exists $M_{\lambda}\subset[0,T]$ Le\-bes\-gue measurable with mes$(M_{\lambda})=\mathit{o}(\lambda)/\lambda$ and such that for every $t\in[0,T]\setminus M_{\lambda}$ and for all $w\in B_{\delta}(w_{0})$, $\varepsilon\in[0,\lambda]$, $||D_{w}g(t,w,\varepsilon)-D_{w}g(t,w_{0},0)||\leq\mathit{o}(\lambda)/\lambda$.
\end{itemize}
The condition $v)$ is a sufficient one for $iii)$. This fact follows from the Main Value Theorem.
\end{remark}

In this paper we apply Theorem \ref{teotool} in order to achieve the results above. As we commented before, there is, as well as we know, no other applications of such Theorem in the literature.

\smallskip

In order to get our results we choose to work with a regularization of system \eqref{system_noncontinuous} since its perturbed part is non-smooth instead of $C^{0}$. The regularization method was introduced in \cite{ST}. In the next lines we briefly summarize it. Indeed, consider $\mathcal{D}$ be an open subset of $\mathbb{R}^{n}$ and $F:\mathbb{R}\times\mathcal{D}\longrightarrow\mathbb{R}$ a $C^{1}$ function having $0$ as a regular value with $\Sigma=F^{-1}({0})$. A continuous function $\varphi:\mathbb{R}\longrightarrow\mathbb{R}$ is a transition function if $\varphi(t)=-1$ for $t\leq1$, $\varphi'(t)>0$ for $t\in(-1,1)$ and $\varphi(t)=1$ for $t\geq1$. So, for $\delta\in(0,1]$ we say that the one-parameter family of continuous functions $Z_{\delta}$ given by
$$
Z_{\delta}(t,X)=\dfrac{Y_{1}(t,X)+Y_{2}(t,X)}{2}+\varphi\left(\dfrac{f(t,X)}{\delta}\right)\dfrac{Y_{1}(t,X)-Y_{2}(t,X)}{2},
$$
is a $\varphi$-regularization of a non-smooth vector field $Z=(Y_{1},Y_{2})$, where $X\in\mathcal{D}$. In this paper, we obtain the results firstly for the regularized system $Z_{\delta}$ of system \eqref{system_noncontinuous} via Theorem \ref{teotool} and then we adapt such results by doing $\delta\to 0$, as we will see in the next section.

\section{Statement of the main results}\label{results}

\subsection{Preliminary of the results}\label{preliminares}

In this subsection we perform a regularization of system \eqref{system_noncontinuous} and also introduce two important functions in order to state the results. Indeed, first we identify $X=(x,y,z)$, $\mathcal{D}=\mathbb{R}^{3}$ and $F(t,X)=y$. If we consider the $C^{0}$ transition function
\begin{equation}\label{Fi}
\varphi_{\delta}(t)=\left\{\begin{array}{rl}
-1, & \mbox{if} \ t\leq-\delta, \vspace{0.2cm}\\
\dfrac{t}{\delta}, & \mbox{if} \ -\delta<t<\delta, \vspace{0.2cm}\\
1, & \mbox{if} \ t\geq\delta,
\end{array}\right.
\end{equation}
then a $C^{0}$ $\varphi_{\delta}$-regularization of system \eqref{system_noncontinuous} writes
\begin{equation}\label{system_regularised}
\begin{array}{rl}
\dot{X^{\varepsilon}_{\delta}}&=f^{\varepsilon}_{\delta}(t,X,\varepsilon)\vspace{0.3cm}\\
&=\dfrac{f^{+}(t,X,\varepsilon)+f^{-}(t,X,\varepsilon)}{2}+\varphi_{\delta}(y)\dfrac{f^{+}(t,X,\varepsilon)-f^{-}(t,X,\varepsilon)}{2}\vspace{0.3cm}\\
&=f(t,X)+\varepsilon g_{\delta}(t,X,\varepsilon),
\end{array}
\end{equation}
where
\begin{equation}\label{system_final}
g_{\delta}(t,X,\varepsilon)=\left\{\begin{array}{ll}
g^{-}(X), &  \ y\leq-\delta, \vspace{0.3cm}\\
\dfrac{g^{+}(X)+g^{-}(X)}{2}+\dfrac{y}{\delta}\left(\dfrac{g^{+}(X)-g^{-}(X)}{2}\right), &  |y|<\delta, \vspace{0.3cm}\\
g^{+}(X), & \ y\geq\delta.
\end{array}\right.
\end{equation}

\noindent We must stress that system \eqref{system_regularised} is smooth and it has the same unperturbed part of system \eqref{system_noncontinuous}, i.e., system \eqref{system_regularised} also possesses the cylinder $\textsl{C}$ filled by periodic solutions when $\varepsilon$ is zero and for all $\delta>0$. In addition, taking $\delta\rightarrow 0$ in system \eqref{system_regularised} we obtain the non-smooth system \eqref{system_noncontinuous}.
\begin{figure}[!h]
\begin{center}
\psfrag{a}{$\delta$}\psfrag{b}{$1$}\psfrag{c}{$-\delta$}\psfrag{d}{$-1$}\psfrag{e}{$1$}\psfrag{f}{$-1$}
 \epsfxsize=10cm
\epsfbox{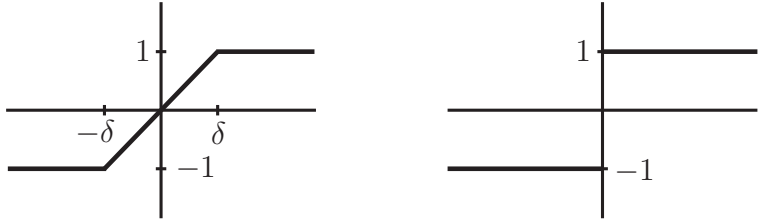} \caption{\small{The graph of $\varphi_{\delta}$ for $\delta>0$ (left) and for $\delta\to 0$ (right).}} \label{funcao_trasicao}\end{center}
\end{figure}
As we said before, in this paper we will apply Theorem \ref{teotool} and then we take $\delta\to0$ in order to extend the results for system \eqref{system_noncontinuous}.

\smallskip

Now we introduce two functions depending on the function $h$ (which determines the shape of the periodic solutions on the cylinder $\textsl{C}$) and the perturbations $p^{\pm}$, $q^{\pm}$ and $r^{\pm}$ which are very important to the results. Indeed, first consider the function
\begin{equation}
A_{h}(\theta)=\cos\theta\dfrac{\partial h}{\partial x}(\cos\theta,\sin\theta)+\sin\theta\dfrac{\partial h}{\partial y}(\cos\theta,\sin\theta).
\end{equation}

Observe that $A_{h}(\theta)$ is rather technical but it plays an important role in the implementation of Theorem \ref{teotool} for system \eqref{system_regularised}. Note also that it depends only on function $h$.

\smallskip

Next, let $M_{\delta}:\mathbb{R}\longrightarrow\mathbb{R}$ be a function defined by
\begin{equation}\label{formula_principal}
\begin{array}{rl}
M_{\delta}(z)=&\displaystyle\int_{0}^{2\pi}-\dfrac{1}{2}\left[h(\cos\theta,\sin\theta)(-\cos\theta(q^{+}(\varsigma)+q^{-}(\varsigma))\right.\vspace{0.2cm}\\
&\qquad +\sin\theta(p^{+}(\varsigma)+p^{-}(\varsigma))) +(r^{+}(\varsigma)+r^{-}(\varsigma)) + \vspace{0.2cm}\\
&\left. \qquad (h(\cos\theta,\sin\theta)(\cos\theta(-q^{+}(\varsigma)+q^{-}(\varsigma))+ \sin\theta(p^{+}(\varsigma)- \right.\vspace{0.2cm}\\
&\left. \qquad p^{-}(\varsigma))) +(r^{+}(\varsigma)-r^{-}(\varsigma)))\varphi_{\delta}(\sin\theta)\right]ds,\vspace{0.2cm}
\end{array}
\end{equation}
where $\varsigma=\left(\cos\theta,\sin\theta,z+\textstyle\int_{0}^{s}h(\cos v,\sin v)dv \right)$ and $z$ is some real value. We will see in Subsection \ref{main results} that under suitable assumptions, the simple zeros of function $M_{\delta}$ provide limit cycles bifurcating from the continuum of periodic solutions on the cylinder $\textsl{C}$.

\smallskip

Now we establish the main results.

\subsection{Main Results}\label{main results}

\begin{lemma}\label{fl}
Suppose that $A_{h}(\theta)=0$, $\forall\theta\in[0,2\pi)$. Then, for $|\e|$ sufficiently small and for every $z_{0}$ such that $M_{\delta}(z_{0})=0$ and $M_{\delta}'(z_{0})\neq0$, the smooth system \eqref{system_regularised} has a limit cycle bifurcating from the band of periodic solutions of the cylinder $\textsl{C}$ with $\e=0$. Moreover, there exists at most $s=\max\{m,n,p\}$ values of $z$ for which $M_{\delta}(z)=0$.
\end{lemma}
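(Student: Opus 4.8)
The plan is to apply Theorem~\ref{teotool} to the regularized system \eqref{system_regularised}, which is smooth in $X$ (indeed $C^0$, and $C^1$ away from $|y|=\delta$) and $2\pi$-periodic in $t$. First I would verify hypotheses (i) and (ii). The unperturbed part of \eqref{system_regularised} coincides with system \eqref{system_no_perturbation}, whose periodic orbits on the cylinder $\textsl{C}$ are parametrized by the initial height $z_0$; thus I take $k=1$, let $U\subset\R$ be an open interval of $z$-values and set $\xi(z)=(1,0,z)$ (using $\theta_0=0$), so $D\xi$ has rank $1$. For (ii) I would compute the fundamental matrix of the linearization \eqref{linearizado} along the orbit $x(t,\xi(z),0)=(\cos t,\sin t, z+\int_0^t h\,dv)$ and check that the Floquet multiplier $+1$ has geometric multiplicity exactly $k=1$. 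Here is where the hypothesis $A_h(\theta)\equiv 0$ enters: the variational equation in the $z$-direction involves precisely $\cos\theta\,\partial_x h+\sin\theta\,\partial_y h=A_h(\theta)$, and its vanishing is what forces the monodromy to have the correct block structure (a single Jordan-trivial eigenvalue $1$ in the direction tangent to the cylinder of periodic orbits, with the remaining $2\times 2$ block contributed by the planar rotation having no extra eigenvalue $1$). I expect this monodromy computation to be the main obstacle, since it requires explicitly integrating the $3\times 3$ variational system and reading off the multiplicities.

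Next I would construct the adjoint solution. Since $k=1$, I need a single $2\pi$-periodic solution $u_1(\cdot,z)$ of the adjoint system \eqref{adjoint}. I would obtain it as the left-eigenvector direction dual to $\xi'(z)$; concretely, the periodic adjoint solution has the form dictated by the transpose of the fundamental matrix found above, and its three components are the coefficients that multiply $p^\pm,q^\pm,r^\pm$ in the integrand of $M_\delta$. I would then substitute $u_1(s,z)$ together with the perturbation $g_\delta(s,x(s,\xi(z),0),0)$ into the Malkin integral $M(z)=\int_0^{2\pi}\langle u_1,g_\delta\rangle\,ds$. Matching the definition \eqref{system_final} of $g_\delta$ on the strip $|y|<\delta$ (which produces the $\varphi_\delta(\sin\theta)$ factor) and outside it, and recalling $g(p)=g^\pm(p)$ according to $\sgn(y)$, should reproduce exactly the stated formula \eqref{formula_principal} for $M_\delta(z)$. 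Thus $M_\delta$ is the Malkin bifurcation function for \eqref{system_regularised}, and conclusion (2) of Theorem~\ref{teotool}, applied with a small interval $U$ around a simple zero $z_0$ (where $M_\delta(z_0)=0$, $M_\delta'(z_0)\neq 0$ guarantees $d(M_\delta,U)=\pm 1\neq 0$ and $M_\delta\neq 0$ on $\partial U$), yields a $2\pi$-periodic solution, i.e.\ a limit cycle, bifurcating from $\textsl{C}$ for all small $|\e|$.

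It remains to prove the counting assertion that $M_\delta$ has at most $s=\max\{m,n,p\}$ zeros. For this I would analyze $M_\delta$ as a function of the single real variable $z$. Inspecting \eqref{formula_principal}, the dependence on $z$ enters only through the third coordinate $z+\int_0^s h\,dv$ of the argument $\varsigma$, which feeds into the polynomials $p^\pm,q^\pm,r^\pm$. Since each of $p^\pm,q^\pm,r^\pm$ is polynomial in its third entry of degrees at most $m,n,p$ respectively, after performing the $\theta$-integration the result $M_\delta(z)$ is a polynomial in $z$ of degree at most $\max\{m,n,p\}$. I would make this precise by expanding each $z$-dependent term via the binomial theorem in powers of $z$, observing that integration in $\theta$ only affects the coefficients (which become definite integrals of trigonometric-polynomial expressions in $\theta$ and the regularization factor $\varphi_\delta(\sin\theta)$) and cannot raise the $z$-degree. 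A nonzero polynomial of degree at most $s$ has at most $s$ real roots, giving the bound. The only subtlety is to confirm that the highest-degree coefficient is the genuine leading term and that no cancellation forces the degree higher; since no term in $\varsigma$ carries $z$ to a power exceeding the respective polynomial degree, the degree bound $s=\max\{m,n,p\}$ is immediate.
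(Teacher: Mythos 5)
There is a genuine gap at the very first step: you apply Theorem~\ref{teotool} to the three-dimensional system \eqref{system_regularised} in the original coordinates with $t$ as time and $k=1$, but hypothesis (ii) fails in that setting. The unperturbed system \eqref{system_no_perturbation} is autonomous, so along each periodic orbit the velocity $f(x(t,\xi(z),0))$ is itself a $2\pi$-periodic solution of the variational equation \eqref{linearizado}; together with the constant solution $(0,0,1)$ coming from the $z$-translation of the family, this gives \emph{two} linearly independent periodic solutions (at $(1,0,z)$ the velocity is $(0,1,h(1,0))$, independent of $(0,0,1)$). Hence the Floquet multiplier $+1$ has geometric multiplicity $2$, not $k=1$. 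Your claim that the planar $2\times 2$ block contributes ``no extra eigenvalue $1$'' is exactly where this goes wrong: on the invariant circle the planar monodromy has eigenvalues $1$ (tangential/velocity direction) and $e^{4\pi}$ (radial direction, since $\frac{d}{dr}\bigl(r^3-r\bigr)\big|_{r=1}=2$). With multiplicity $2$ the adjoint system has two independent periodic solutions and the Malkin function is forced to be $\R^2$-valued, which does not match the scalar $M_\delta$ of the statement.

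The paper avoids this by first passing to cylindrical coordinates and then taking $\theta$ as the new independent variable, which reduces the problem to the \emph{two}-dimensional non-autonomous system \eqref{system_cylindrical_coordinates_2} in $(z,r)$. This reparametrization removes the trivial multiplier attached to time translation; the resulting monodromy matrix is $\mathrm{diag}(1,e^{4\pi})$ (using $A_h\equiv 0$ to kill the off-diagonal entry), so (ii) holds with $k=1$ and the adjoint solution $u_1=(-1,0)^T$ produces the scalar $M_\delta$ of \eqref{formula_principal}. Two further points you pass over: Theorem~\ref{teotool} requires $g$ to be locally uniformly Lipschitz in the state variable, which is not automatic since $g_\delta$ is only piecewise polynomial across $|y|=\delta$ (the paper proves this via a convex-hull/segment argument over the regions $R_1,R_2,R_3$); and to conclude with an isolated periodic orbit (a limit cycle, with the convergence $\varphi_\delta^\e(0)\to\xi(z_0)$ used later in Theorem~\ref{t1}) one needs conclusion (3), hence conditions (iii) (via (v)) and (iv), which the paper verifies using the Mean Value Theorem and $M_\delta'(z_0)\neq 0$. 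Your final counting argument (that $M_\delta$ is a polynomial in $z$ of degree at most $s=\max\{m,n,p\}$) is correct and matches the paper.
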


\begin{remark}
We note that the condition $A_{h}(\theta)=0$ is not empty. For instance, it is not difficult to verify that for each $c_{i}\in\mathbb{R}$ the functions
 $$
 \tilde{h}(x,y)=\displaystyle\sum_{i=0}^{\infty}\dfrac{c_{i}}{(x^{2}+y^{2})^{\frac{2i+1}{2}}}x^{2i+1}
 $$
satisfy such property. Moreover, it holds that the function $\rho(x,y)=1/(x^{2}+y^{2})^{\frac{2i+1}{2}}$ satisfies $\rho(r\cos\theta,r\sin\theta)=1$ and the power of $x$ in the polynomials $c_{i}x^{2i+1}$ is always odd, i.e., the cylinders defined by the function $\tilde{h}$ presented previously is filled by periodic orbits of system \eqref{system_no_perturbation}. Actually, this facts says that the results take into account infinitely many different cylinders.
\end{remark}

The next theorem is the main result of the paper and says that the limit cycles that we find for the regularized system \eqref{system_regularised} are preserved for the non-smooth system \eqref{system_noncontinuous} when $\delta\to 0$.

\begin{theorem}\label{t1}
Assume that $A_{h}(\theta)=0$, $\forall\theta\in[0,2\pi)$. Then, for $|\e|$ sufficiently small and for each $z_{0}$ such that $M_{\delta}(z_{0})=0$ and $M_{\delta}'(z_{0})\neq0$, the non-smooth system \eqref{system_noncontinuous} has a limit cycle bifurcating from the band of periodic solutions of the cylinder $\textsl{C}$ with $\e=0$. Moreover, there exists at most $s=\max\{m,n,p\}$ values of $z$ for which $M_{\delta}(z)=0$.
\end{theorem}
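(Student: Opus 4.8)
The plan is to transfer the conclusion of Lemma~\ref{fl}, which holds for every fixed $\delta>0$, to the non-smooth system \eqref{system_noncontinuous} by letting $\delta\to 0$. First I would observe that Lemma~\ref{fl} already delivers, for each fixed $\delta>0$, a simple zero $z_0$ of $M_\delta$ yielding a $2\pi$-periodic solution of the smooth regularized system \eqref{system_regularised}. The key structural fact to exploit is that the constant term of $M_\delta$ coming from the transition function $\varphi_\delta(\sin\theta)$ is supported on the set $\{\theta:|\sin\theta|<\delta\}$, whose Lebesgue measure tends to $0$ as $\delta\to 0$; consequently the $\delta$-dependent piece of the integrand in \eqref{formula_principal} contributes a term that vanishes in the limit. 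Thus $M_\delta$ converges (uniformly on compact $z$-intervals, together with its $z$-derivative) to the function $M_0$ obtained by simply deleting the $\varphi_\delta$ term, and a simple zero $z_0$ of $M_\delta$ persists as a simple zero for all small $\delta$ and in the limit.

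Next I would invoke part~(1) of Theorem~\ref{teotool} in the contrapositive spirit together with a compactness/limiting argument. Concretely, take a sequence $\delta_m\to 0$ and the corresponding periodic solutions $\varphi_{\delta_m}$ of \eqref{system_regularised} produced by Lemma~\ref{fl}; since their initial conditions lie in a fixed compact neighborhood of $\xi(z_0)$ on the cylinder $\textsl{C}$, I would extract a convergent subsequence. The uniform (in $\delta$) Lipschitz control on $g_\delta$ — which holds because the middle branch of \eqref{system_final} is a convex interpolation between $g^+$ and $g^-$ and hence is Lipschitz with a bound independent of $\delta$ — guarantees that the limit of these solutions is an actual $2\pi$-periodic solution of the non-smooth system \eqref{system_noncontinuous}. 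Here the transversality remark after \eqref{system_noncontinuous}, namely that $\textsl{C}\cap\Sigma\subset\Sigma^c$ remains in the sewing region for small $\e$, is essential: it ensures the limiting trajectory crosses $\Sigma$ transversally rather than sliding, so Filippov trajectories are well defined and depend continuously on the data.

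To secure that the limit is genuinely a \emph{limit cycle} (isolated periodic orbit) rather than part of a continuum, I would verify conditions~(iv) and~(v) of Theorem~\ref{teotool}, using Remark~\ref{iii-v} to replace the technical condition~(iii) by~(v). Condition~(v) follows by differentiating \eqref{system_final}: away from the strip $|y|<\delta$ the map $g_\delta$ equals $g^\pm$ and is smooth, while inside the strip the derivative $D_w g_\delta$ differs from $D_w g(\cdot,w_0,0)$ only on a set of measure $o(\lambda)/\lambda$, precisely the measure estimate demanded by~(v). Condition~(iv) is the statement that $z_0$ is a simple zero, i.e. $M_\delta'(z_0)\neq 0$, which is exactly the hypothesis. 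With~(iv) and~(v) in hand, part~(3) of Theorem~\ref{teotool} yields uniqueness of the periodic solution in a ball $B_{\delta_2}(w_0)$, so the bifurcating orbit is isolated and hence a limit cycle. Finally, the bound on the number of zeros of $M_\delta$ by $s=\max\{m,n,p\}$ is inherited directly from Lemma~\ref{fl}, since $M_0$ is (after integration) a polynomial in $z$ whose degree is governed by the highest power of $z$ appearing in the perturbations $p^\pm,q^\pm,r^\pm$; the removal of the $\varphi_\delta$ term does not raise this degree.

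The main obstacle I anticipate is the limiting step $\delta\to 0$: one must ensure that the periodic solutions of the regularized systems do not drift away from the cylinder or lose periodicity in the limit, which requires the uniform-in-$\delta$ estimates on $g_\delta$ and its derivative together with the transversality of $\textsl{C}\cap\Sigma$. The delicate point is that Theorem~\ref{teotool} is applied at each fixed $\delta$ with constants $\e_1,\delta_2$ that could a priori degenerate as $\delta\to 0$; showing these constants can be chosen uniformly — so that a single $\e_1>0$ works for all small $\delta$ — is where the convex structure of the regularization and the measure estimate in~(v) must be used carefully.
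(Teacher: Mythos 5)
Your overall strategy---pass to the limit $\delta\to0$ in the conclusion of the Fundamental Lemma---is the same as the paper's, but you execute it through convergence of the bifurcation functions $M_\delta$ plus a compactness argument on the orbits, whereas the paper argues at the level of Poincar\'e maps: for fixed small $\e$ it shows that the Poincar\'e map $P^{\e}_{\delta}$ of the regularized system converges pointwise as $\delta\to0$ to the Poincar\'e map $P^{\e}$ of the non-smooth system (using that $\textsl{C}\cap\Sigma$ consists of sewing points, so $P^{\e}$ is well defined and continuous), and then the fixed point $\varphi^{\e}_{\delta}(0)$ converges to a fixed point of $P^{\e}$. That route works at fixed $\e$ and so sidesteps the need to verify the hypotheses of Theorem~\ref{teotool} uniformly in $\delta$, which is exactly the difficulty you flag at the end and do not resolve.

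More importantly, there are two concrete errors. First, your identification of the limit of $M_\delta$ is wrong: as $\delta\to0$ one has $\varphi_{\delta}(\sin\theta)\to\sgn(\sin\theta)$ for $\sin\theta\neq0$, so the term of \eqref{formula_principal} carrying the factor $\varphi_{\delta}(\sin\theta)$ does \emph{not} vanish in the limit --- only the discrepancy between $\varphi_{\delta}(\sin\theta)$ and $\sgn(\sin\theta)$ is supported on $\{\theta:|\sin\theta|<\delta\}$. Deleting that term would be fatal: it is precisely the part of $M_\delta$ that sees the difference $g^{+}-g^{-}$, and in the paper's worked example the limiting linear term $\tfrac{2}{3}(a^{+}_{101}-a^{-}_{101})z$, which is what produces the limit cycle of the non-smooth system, comes entirely from it; your $M_{0}$ would be constant there and detect nothing. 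Second, the claim that $g_{\delta}$ is Lipschitz ``with a bound independent of $\delta$'' is false: on the strip $|y|<\delta$ the middle branch of \eqref{system_final} has $y$-derivative containing $(g^{+}-g^{-})/(2\delta)$, which blows up as $\delta\to0$. What is uniform in $\delta$ is only the sup-norm bound (since $\varphi_{\delta}\in[-1,1]$); that suffices for equicontinuity in your Arzel\`a--Ascoli step, but the uniform Lipschitz property you invoke to identify the limit as a genuine solution of the non-smooth system \eqref{system_noncontinuous} needs a separate argument (for instance the transversality of $\textsl{C}\cap\Sigma$, as the paper uses).
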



A particular case of perturbations of system \eqref{system_no_perturbation} is to consider $g^{+}=g^{-}$ in \eqref{perturbations}, i.e., perform the same perturbation in $\Sigma^{+}$ and $\Sigma^{-}$. In this particular case we obtain a smooth perturbation of system \eqref{system_no_perturbation}. In such case system \eqref{system_noncontinuous} becomes smooth and it coincides with its regularized system \eqref{system_regularised}. The next theorem states the results in such si\-tu\-a\-tion.

\begin{theorem}\label{t2}
Assume that $A_{h}(\theta)=0$, $\forall\theta\in[0,2\pi)$, $g^{+}=g^{-}$ and consider the function
\begin{equation}\label{formula2}
\overline{M}_{\delta}(z)=\displaystyle\int_{0}^{2\pi}-\left[h(\cos\theta,\sin\theta)(-\cos\theta q^{+}(\varsigma)+\sin\theta p^{+}(\varsigma)) +r^{+}(\varsigma)\right]ds,
\end{equation}
where $\varsigma=\left(\cos\theta,\sin\theta,z+\textstyle\int_{0}^{s}h(\cos v,\sin v)dv \right)$ and $z$ is some real value. Then, for $|\e|$ sufficiently small and for each $z_{0}$ such that $\overline{M}_{\delta}(z_{0})=0$ and $\overline{M}_{\delta}'(z_{0})\neq0$, the smooth system \eqref{system_noncontinuous} has a limit cycle bifurcating from the band of periodic solutions of the cylinder $\textsl{C}$ with $\e=0$. Moreover, there exists at most $s=\max\{m,n,p\}$ values of $z$ for which $\overline{M}_{\delta}(z)=0$.
\end{theorem}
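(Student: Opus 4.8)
The plan is to derive Theorem \ref{t2} as a direct specialization of Lemma \ref{fl} (equivalently Theorem \ref{t1}) to the case $g^{+}=g^{-}$, rather than to reprove it from scratch via Theorem \ref{teotool}. First I would observe that setting $g^{+}=g^{-}$ makes the perturbation function $g$ smooth: the $\sgn(y)$ term in the definition of $g$ vanishes identically, so $g\equiv g^{+}$ on all of $\R^{3}$ and system \eqref{system_noncontinuous} is already $C^{1}$. Consequently the regularization step is superfluous here, since $g_{\delta}$ in \eqref{system_final} collapses to $g^{+}$ for every $\delta>0$ (all three branches agree), and system \eqref{system_regularised} coincides with \eqref{system_noncontinuous}. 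This is exactly the remark made in the text preceding the statement, and it already reconciles the phrase ``the smooth system \eqref{system_noncontinuous}'' in the conclusion.

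Next I would reduce the bifurcation function itself. Substituting $p^{-}=p^{+}$, $q^{-}=q^{+}$, $r^{-}=r^{+}$ into formula \eqref{formula_principal}, the second bracket multiplying $\varphi_{\delta}(\sin\theta)$ contains only the differences $-q^{+}+q^{-}$, $p^{+}-p^{-}$ and $r^{+}-r^{-}$, all of which are now zero; hence the $\varphi_{\delta}$-dependent part disappears entirely. In the first bracket the sums $q^{+}+q^{-}$, $p^{+}+p^{-}$, $r^{+}+r^{-}$ become $2q^{+}$, $2p^{+}$, $2r^{+}$, so the prefactor $-\tfrac{1}{2}$ combines with these to yield precisely
\begin{equation*}
M_{\delta}(z)=\int_{0}^{2\pi}-\left[h(\cos\theta,\sin\theta)(-\cos\theta\,q^{+}(\varsigma)+\sin\theta\,p^{+}(\varsigma))+r^{+}(\varsigma)\right]ds=\overline{M}_{\delta}(z).
\end{equation*}
Thus the reduced Malkin function $\overline{M}_{\delta}$ of \eqref{formula2} is literally the function $M_{\delta}$ of Lemma \ref{fl} evaluated at $g^{+}=g^{-}$, and in particular it no longer depends on $\delta$ (which is consistent, since no regularization is occurring).

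With these two reductions in place, the conclusion follows immediately by invoking Lemma \ref{fl} (or Theorem \ref{t1}): under the standing hypothesis $A_{h}(\theta)\equiv 0$, every simple zero $z_{0}$ of $M_{\delta}=\overline{M}_{\delta}$, i.e. every $z_{0}$ with $\overline{M}_{\delta}(z_{0})=0$ and $\overline{M}_{\delta}'(z_{0})\neq 0$, yields a limit cycle of \eqref{system_regularised}$=$\eqref{system_noncontinuous} bifurcating from the cylinder $\textsl{C}$ for small $|\e|$. The count $s=\max\{m,n,p\}$ transfers verbatim: since $p^{+},q^{+},r^{+}$ have degrees at most $m,n,p$ and the $z$-dependence of $\overline{M}_{\delta}$ enters only through the third coordinate $z+\int_{0}^{s}h\,dv$ of $\varsigma$, the integrand is polynomial in $z$ of degree at most $\max\{m,n,p\}$, and integrating in $\theta$ does not raise the degree; hence $\overline{M}_{\delta}$ is a polynomial in $z$ with at most $s$ real roots. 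The only point requiring a word of care — and the step I expect to be the mildest obstacle — is checking that $\overline{M}_{\delta}$ really is a genuine polynomial in $z$ of the claimed degree, that is, that the $\theta$-integration of the cross terms does not annihilate the top-degree coefficient in a way that would make the bound vacuous or force a strictly smaller degree; this is a routine verification that the leading coefficient survives, entirely parallel to the corresponding argument already carried out for $M_{\delta}$ in the proof of Lemma \ref{fl}.
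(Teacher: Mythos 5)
Your proposal is correct and follows essentially the same route as the paper: substitute $g^{+}=g^{-}$ into formula \eqref{formula_principal} so that the $\varphi_{\delta}$-dependent bracket vanishes and the remaining terms reduce to \eqref{formula2}, then invoke the Fundamental Lemma together with the polynomial degree count $s=\max\{m,n,p\}$. Your additional observation that the regularization is trivial here (all branches of $g_{\delta}$ coincide, so \eqref{system_regularised} equals \eqref{system_noncontinuous}) is a welcome explicit justification of a point the paper only makes in the discussion preceding the theorem.
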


We observe that although Theorems \ref{t1} and \ref{t2} provide the same upper bound for the number of limit cycles bifurcating from $\textsl{C}$ by performing non-smooth and smooth functions, respectively, for concrete examples we may reach different upper bounds in each case. In fact, in similar situations usually non-smooth systems present more limit cycles than smooth ones. In Subsection \ref{exemplos} we will discuss this topic in more details through a specific example. Before that, in what follows we present the proof of the results.

\section{Proofs and examples}\label{provas_discussoes}

\subsection{Proof of the results}\label{provas}

Now we apply the methods and tools described previously in order to prove the results presented in Subsection \ref{main results}. We start proving the Fundamental Lemma.

\begin{proof}[Proof of Fundamental Lemma:]
Consider system \eqref{system_regularised} and assume that for this system we verify $A_{h}(\theta)=0$ for all $\theta\in[0,2\pi)$. Since the periodic solutions of system \eqref{system_no_perturbation}, that we are perturbing, live on the cylinder $\textsl{C}$, we will perform a cylindrical change of coordinates in system \eqref{system_regularised} by introducing the new variables $(z,r,\theta)$ given implicitly by $x=r\cos\theta$, $y=r\sin\theta$ and $z=z$. In the new variables $(z,r,\theta)$ system \eqref{system_regularised} writes
\begin{equation}\label{system_cylindrical_coordinates}
\begin{array}{rl}
\dot{z}=&h(r\cos\theta,r\sin\theta)+\varepsilon\dfrac{1}{2}\left[ r^{+}(\vartheta)+r^{-}(\vartheta)+(r^{+}(\vartheta)-r^{-}(\vartheta))\right.\vspace{0.2cm}\\
&\left. \varphi_{\delta}(r\sin\theta)\right],\vspace{0.2cm}\\
\dot{r}=&-r+r^{3} +\varepsilon \dfrac{1}{2}\left[   \cos\theta(p^{+}(\vartheta)+p^{-}(\vartheta))+\sin\theta(q^{+}(\vartheta)+q^{-}(\vartheta))   +\right. \vspace{0.2cm}\\
&\left. \cos\theta(p^{+}(\vartheta)-p^{-}(\vartheta))+\sin\theta(q^{+}(\vartheta)-q^{-}(\vartheta))\varphi_{\delta}(r\sin\theta)    \right],\vspace{0.2cm}\\
\dot{\theta}=& 1+\varepsilon \dfrac{1}{2r}\left[   \cos\theta(q^{+}(\vartheta)+q^{-}(\vartheta))-\sin\theta(p^{+}(\vartheta)+p^{-}(\vartheta))   +\right. \vspace{0.2cm}\\
&\left. \cos\theta(q^{+}(\vartheta)-q^{-}(\vartheta))+\sin\theta(-p^{+}(\vartheta)+p^{-}(\vartheta))\varphi_{\delta}(r\sin\theta)    \right],
\end{array}
\end{equation}
where $\vartheta=(r\cos\theta,r\sin\theta,z)$.

\smallskip

Now we change the independent variable $t$ of system \eqref{system_cylindrical_coordinates} to the new variable $\theta$ and obtain the following equivalent system
\smallskip
\begin{equation}\label{system_cylindrical_coordinates_2}
\begin{array}{rcl}
\dfrac{dz}{d\theta}&=& h(r\cos\theta,r\sin\theta)+\varepsilon\dfrac{1}{2r}\left[h(r\cos\theta,r\sin\theta)(-\cos\theta(q^{+}(\vartheta)+\right.\vspace{0.2cm}\\
&& q^{-}(\vartheta))+\sin\theta(p^{+}(\vartheta)+p^{-}(\vartheta))) +r(r^{+}(\vartheta)+r^{-}(\vartheta)) + \vspace{0.2cm}\\
&&\left. (h(r\cos\theta,r\sin\theta)(\cos\theta(-q^{+}(\vartheta)+q^{-}(\vartheta))+ \sin\theta(p^{+}(\vartheta)- \right.\vspace{0.2cm}\\
&&\left. p^{-}(\vartheta))) +r(r^{+}(\vartheta)-r^{-}(\vartheta)))\varphi_{\delta}(r\sin\theta)\right]+\textsl{O}_2\vspace{0.2cm}\\
&=& h(r\cos\theta,r\sin\theta)+\varepsilon G_{\delta}^{1}(\theta,z,r,\varepsilon),\vspace{0.2cm}\\
\dfrac{dr}{d\theta}&=&-r+r^{3} +\varepsilon \dfrac{1}{2}\left[   -(q^{+}(\vartheta)+q^{-}(\vartheta))((r^{2}-1)\cos\theta-\sin\theta)+  \right. \vspace{0.2cm}\\
&&  (p^{+}(\vartheta)+p^{-}(\vartheta))((r^{2}-1)\sin\theta+\cos\theta)+(-(q^{+}(\vartheta)+q^{-}(\vartheta))  \vspace{0.2cm}\\
&& ((r^{2}-1)\cos\theta-\sin\theta)+(p^{+}(\vartheta)+p^{-}(\vartheta))((r^{2}-1)\sin\theta+ \vspace{0.2cm}\\
&&\left.\cos\theta))\varphi_{\delta}(r\sin\theta)\right]+\textsl{O}_2 \vspace{0.2cm}\\
&=& -r+r^{3} +\varepsilon G_{\delta}^{2}(\theta,z,r,\varepsilon),
\end{array}
\end{equation}
where again $\vartheta=(r\cos\theta,r\sin\theta,z)$ and $\textsl{O}_2=\textsl{O}(\varepsilon^{2})$. Observe that the vector field of system \eqref{system_cylindrical_coordinates_2} is $2\pi$-periodic. Addi\-ti\-o\-na\-lly, in order to see that its perturbed part is locally uniformly Lipschitz in the variables $(z,r)\in\mathbb{R}^{2}$, consider the function $G_{\delta}:\mathbb{\mathbb{R}}\times\mathbb{R}^{2}\times[0,1]\longrightarrow\mathbb{R}^{2}$ as $G_{\delta}(\theta,w_{1},w_{2},\varepsilon)=(G_{\delta}^{1}(\theta,w_{1},w_{2},\varepsilon),G_{\delta}^{2}(\theta,w_{1},w_{2},\varepsilon))$. Consider also the sets
$$
\begin{array}{l}
R_{1}=\{(\theta,w_{1},w_{2},\varepsilon)\in\mathbb{\mathbb{R}}\times\mathbb{R}^{2}\times[0,1];w_{2}\sin\theta\leq -\delta\},\vspace{0.2cm}\\
R_{2}=\{(\theta,w_{1},w_{2},\varepsilon)\in\mathbb{\mathbb{R}}\times\mathbb{R}^{2}\times[0,1];-\delta\leq w_{2}\sin\theta\leq \delta\},\vspace{0.2cm}\\
R_{3}=\{(\theta,w_{1},w_{2},\varepsilon)\in\mathbb{\mathbb{R}}\times\mathbb{R}^{2}\times[0,1];w_{2}\sin\theta\geq \delta\},
\end{array}
$$
and let $K\subset\mathbb{\mathbb{R}}\times\mathbb{R}^{2}\times[0,1]=\bigcup_{i=1,2,3}R_{i}$ be a compact set. In order to see that $G_{\delta}$ is Lipschitz on $K$, it is sufficient to show that $G_{\delta}$ is Lipschitz on the convex hull $\overline{K}_{C}$ of $K$, once $K\subseteq \overline{K}_{C}$. Indeed, let $x$ and $y$ be two arbitrary points of $\mathbb{R}^{2}$ in $\overline{K}_{C}$. Now consider $S$ the segment connecting $x$ and $y$ and $S_{i}=S\cap R_{i}$, $i=1,2,3$. This intersection consists of a finite number of closed segments contained in $S$, since the boundaries between $R_{1}$ and $R_{2}$ and between $R_{2}$ and $R_{3}$ are codimension one manifolds of $\mathbb{\mathbb{R}}\times\mathbb{R}^{2}\times[0,1]$. The restrictions $G_{\delta}|_{S_{i}}$ are polynomial in the variables $w_{1}$ and $w_{2}$ for each $i=1,2,3$ and consequently they are also $C^{\infty}$. It means that each restriction $G_{\delta}|_{S_{i}}$ is locally $L_{i}$-Lipschitz on the compact set $S_{i}$, for each $i=1,2,3$, which is equivalent to be $L_{i}$-Lipschitz. Then for all $(\theta,x,\varepsilon),(\theta,y,\varepsilon)\in \overline{K}_{C}$, there exists $L=\max\{L_{1},L_{2},L_{3}\}$ such that
\begin{multline*}
||G_{\delta}(\theta,x,\varepsilon)-G_{\delta}(\theta,y,\varepsilon)|| \leq \displaystyle\sum_{p^{i}\in S_{i}}L_{i}||p_{i}-p_{i+1}||  \\
\leq L\left( \displaystyle\sum_{p^{j}\in S_{1}}||p_{j}-p_{j+1}||+\displaystyle\sum_{p^{k}\in S_{2}}||p_{k}-p_{k+1}||+\displaystyle\sum_{p^{l}\in S_{3}}||p_{l}-p_{l+1}|| \right),
\end{multline*}
where $p^{s}$ is the segment with ends in $p_{s}$ and $p_{s+1}$ for $s\in\{i,j,k,l\}$, $x=p_{s}$ and $y=p_{r}$, for some $s,r\in\mathbb{N}$. Consequently, if $n$ is the number of intersections of $S$ with the boundaries of each $R_{i}$, $i=1,2,3$, then once $S$ is a segment we obtain
$$
\begin{array}{rcl}
||G_{\delta}(\theta,x,\varepsilon)-G_{\delta}(\theta,y,\varepsilon)||&\leq& L\left(||x-p_{1}||+\ldots+||p_n-y||\right)\vspace{0.2cm}\\
&\leq & L||x-y||.
\end{array}
$$

Hence $G_{\delta}$ is locally uniformly Lipschitz in the variables $(z,r)\in\mathbb{R}^{2}$.

\smallskip

Now we call $X=(z,r)$ and consider system \eqref{system_cylindrical_coordinates_2} with $\varepsilon=0$. Then we obtain
\begin{equation}\label{unperturbed2}
\begin{array}{l}
\dfrac{dX}{d\theta}=f(\theta,X),
\end{array}
\end{equation}
where $f(\theta,X)=(h(r\cos\theta,r\sin\theta),-r+r^3)$. By hypothesis $f\in C^{2}$. In the $zr$-plane, the straight line $r=1$ is invariant. Hence the solution $X(\theta,X_{0},0)$ with initial condition $X_{0}=(z_{0},1)$ is
$$
X(\theta,X_{0},0)=(z(\theta,z_{0}),r(\theta,z_{0}))=\left(z_{0}+\displaystyle\int_{0}^{\theta}h(\cos s,\sin s)ds,1\right).
$$

\smallskip

Since $\textstyle\int_{0}^{\theta}h(\cos s,\sin s)ds$ is periodic, it follows that $z(\theta,z_{0})$ is $2\pi$-periodic in the variable $\theta$ and for each point in a neighborhood of $z=z_{0}$ on the straight line $r=1$ passes a $2\pi$-periodic solution that lies in the phase space $(z,r,\theta)\in\mathbb{R}^2\times\mathbb{S}^{1}$. Consequently system \eqref{unperturbed2} has a family $\mathcal{M}=\{(z,r)\in\mathbb{R}^{2}:r=1\}$ of $2\pi$-periodic solutions. 

\smallskip

Now consider $R_{0}>0$. There exists an open ball $U\subset\mathbb{R}$, $R_{0}\in\mathbb{R}$, $U=\{z_{0}\in(-R_{0},R_{0})\}$ and a function $\xi\in C^{1}(\overline{U},\mathbb{R}^{2})$,
$$
\xi(z)=\left(z+\displaystyle\int_{0}^{\theta}h(\cos s,\sin s)ds,1\right),
$$
which is a parametrization of each periodic solution on $\mathcal{M}$ satisfying that for any $z\in\overline{U}$, we have $D\xi(z)=(1,0)^{T}$, whose rank is 1. Note that $\xi(z)$ is the initial condition of a $2\pi$-periodic solution of \eqref{unperturbed2}.

\smallskip

Now we linearize system \eqref{unperturbed2} along its periodic solutions $X(\theta,\xi(z),$ $0)$. We get
\begin{equation}\label{system_linearized}
\dfrac{dY}{d\theta}=D_{X}f(\theta,X(\theta,\xi(z),0))Y.
\end{equation}

Consequently the matrix $D_{X}f(\theta,X(\theta,\xi(z),0))$ writes
$$
\left(
\begin{array}{cc}
\dfrac{\partial u}{\partial z}(\xi(z),1)&\dfrac{\partial u}{\partial r}(\xi(z),1)\vspace{0.2cm}\\
\dfrac{\partial (-r+r^{3})}{\partial z}(\xi(z),1)&\dfrac{\partial (-r+r^{3})}{\partial r}(\xi(z),1)
\end{array}
\right),
$$
with $u(r,\theta)=h(x(r,\theta),y(r,\theta))$, $x(r,\theta)=r\cos\theta$ and $y(r,\theta)=r\sin\theta$

\smallskip

Now, if we observe that $\frac{\partial u}{\partial r}(\xi(z),1)=A_{h}(\theta)$, then it is easy to check that system \eqref{system_linearized} writes
\begin{equation}
\dfrac{dY}{d\theta}=\left(
\begin{array}{cc}
0&A_{h}(\theta)\\
0&2
\end{array}
\right)Y,
\end{equation}
and has the fundamental matrix $N_ {Y}(\theta)$ given by
\begin{equation}
N_{Y}(\theta)=\left(
\begin{array}{cc}
1&\displaystyle\int_{0}^{\theta}e^{2s}A_{h}(s)ds\\
0&e^{2\theta}
\end{array}
\right).
\end{equation}

We note that $N_{Y}(0)=I_{2}$. Thus, since $A_{h}(\theta)=0$ by hypothesis, the monodromy matrix $C=N_{Y}^{-1}(0)N_{Y}(2\pi)$ is
\begin{equation}
C=\left(
\begin{array}{cc}
1&0\\
0&e^{4\pi}
\end{array}
\right),
\end{equation}
and consequently system  \eqref{system_linearized} has the Floquet multiplier $+1$ with the geometric multiplicity equal to 1.

\smallskip

In what follows we consider the adjoint linear system
\begin{equation}\label{system_adjoint}
\dfrac{dU}{d\theta}=-(D_{X}f(\theta,X(\theta,\xi(z),0)))^{*}U.
\end{equation}

Since system \eqref{system_adjoint} is the adjoint of system \eqref{system_linearized}, its fundamental matrix $N_{U}(\theta)$ is $N_{U}(\theta)=-(N_{Y}(\theta))^{*}$ and consequently a linearly independent solution is $u_{1}(\theta,z)=(-1,0)^{T}$. Observe that $u_{1}(0,z)$ is $C^{1}$ with respect to $z$. Therefore the Malkin's bifurcation function $M_{\delta}(z)$ takes the form
\begin{equation}
\begin{array}{rcl}
M_{\delta}(z)&=&\displaystyle\int_{0}^{2\pi}<u_{1}(s,z),G_{\delta}(s,X(s,\xi(z),0))>ds\vspace{0.2cm}\\
&=&\displaystyle\int_{0}^{2\pi}-G_{\delta}^{1}(s,X(s,\xi(z),0))ds.
\end{array}
\end{equation}

In other words, we obtain the formula
\begin{equation}\label{formula0}
\begin{array}{rl}
M_{\delta}(z)=&\displaystyle\int_{0}^{2\pi}-\dfrac{1}{2}\left[h(\cos\theta,\sin\theta)(-\cos\theta(q^{+}(\varsigma)+q^{-}(\varsigma))\right.\vspace{0.2cm}\\
&\qquad +\sin\theta(p^{+}(\varsigma)+p^{-}(\varsigma))) +(r^{+}(\varsigma)+r^{-}(\varsigma)) + \vspace{0.2cm}\\
&\left. \qquad (h(\cos\theta,\sin\theta)(\cos\theta(-q^{+}(\varsigma)+q^{-}(\varsigma))+ \sin\theta(p^{+}(\varsigma)- \right.\vspace{0.2cm}\\
&\left. \qquad p^{-}(\varsigma))) +(r^{+}(\varsigma)-r^{-}(\varsigma)))\varphi_{\delta}(\sin\theta)\right]ds,\vspace{0.2cm}
\end{array}
\end{equation}
where now $\varsigma=\left(\cos\theta,\sin\theta,z+\textstyle\int_{0}^{s}h(\cos v,\sin v)dv \right)$.

\smallskip

In order to use Theorem \ref{teotool} to assure the existence of limit cycles for system \eqref{system_regularised}, we observe that for each $z_{0}\in U$ such that $M_{\delta}(z_{0})=0$ and $M_{\delta}'(z_{0})\neq0$, the Implicit Function Theorem says that $M_{\delta}'(z)\neq0$ for all $z\in\overline{U}$, and then we get $d(M_{\delta},U)\neq0$ since $M_{\delta}$ is continuous. In addition, we must verify condition $iii)$ of Section 2. However, taking into account Remark \ref{iii-v}, we will verify condition $v)$ instead of condition $iii)$. Indeed, let $\lambda$ be a positive number, $\varepsilon\in[0,\lambda]$, $w_{0}=\xi(z_{0})$ and consider the values $p_{+}=\arcsin(\delta/r)$ and $p_{-}=\arcsin(-\delta/r)$. Observe that function $\varphi_{\delta}$ is continuous except in the points $\theta=p_{\pm}$. In addition, consider the sets $L_{\lambda}^{\pm}=\{\omega\in[0,2\pi];|\omega-p_{\pm}|<2\delta\}$ and $L_{\lambda}=L_{\lambda}^{-}\cup L_{\lambda}^{+}$. Thus $med(L_{\lambda})=8\lambda=\mathit{o}(\lambda)/\lambda$.

\smallskip

Now we observe that for all $\theta\in[0,2\pi]\setminus L_{\lambda}$ and for all $w\in B_{\lambda}(w_{0})$ the function $G_{\delta}$ is $C^{\infty}$. Indeed, $G_{\delta}$ does not switch from one region $R_{i}$ to another $R_{j}$ when $i\neq j$ and $w$ varies on $B_{\lambda}(w_{0})$, for $i,j=1,2,3$. It holds once $\theta\in[0,2\pi]\setminus L_{\lambda}$ and the radius of the ball $B_{\lambda}(w_{0})$ is smaller than the radius of each neighborhood $(p_{\pm}-\lambda,p_{\pm}+\lambda)$ of $p_{\pm}$. Then, from the Mean Value Theorem we obtain
$$
||D_{w}G_{\delta}(\theta,w,\varepsilon)-D_{w}G_{\delta}(\theta,w_{0},0)||\leq \displaystyle\sup_{s\in\overline{S}}||D^{2}_{w}G_{\delta}(s)||\cdot|w-w_{0}|,
$$
where $S=B_{\lambda}(w_{0})$ and $D^{2}_{w}G_{\delta}$ denotes the second derivative of the function $D_{w}G_{\delta}$. Thus, once $D^{2}_{w}G_{\delta}$ is $C^{1}$ on the compact set $V$, it holds
$$
||D_{w}G_{\delta}(\theta,w,\varepsilon)-D_{w}G_{\delta}(\theta,w_{0},0)||\leq K|w-w_{0}|\leq K\lambda=\dfrac{K\lambda^{2}}{\lambda},
$$
and then condition $v)$ holds once $K\lambda^2=\mathit{o}(\lambda)$.

\smallskip

In order to prove condition $iv)$, first observe that $M_{\delta}$ is a real-valued function whose domain is $\mathbb{R}$. Now, given $z_{0}\in U$ satisfying $M_{\delta}(z_{0})=0$ with $M_{\delta}'(z)\neq0$ for all $z\in\overline{U}$, consider $\delta_{1}>0$ an arbitrary value such that $V=(z_{0}-\delta_{1},z_{0}+\delta_{1})\subset U$. Thus, by the Mean Value Theorem, there exist $c\in V$ such that
$$
|M_{\delta}(b)-M_{\delta}(a)|=|M_{\delta}'(c)|\cdot|b-a|,
$$
for all $a,b\in V$ and $c\in(a,b)$. The proof of condition $iv)$ follows taking $L_{M_{\delta}}=\inf\{|M_{\delta}'(z)|;z\in V\}>0$ and observing that $|M_{\delta}'(c)|\geq L_{M_{\delta}}$, i.e., $|M_{\delta}(b)-M_{\delta}(a)|\geq L_{M_{\delta}}|b-a|$ for all $a,b\in V$.

\smallskip

Therefore Theorem \ref{teotool} assures that there exists $\varepsilon_{1}>0$ sufficiently small and $\delta_{2}>0$ such that for each $\varepsilon\in(0,\varepsilon_{1})$, there exist a unique $2\pi$-periodic solution (consequently a limit cycle) $\varphi_{\delta}^{\varepsilon}\in\C^{0}(\mathbb{R},\mathbb{R}^{2})$ of the regularized system \eqref{system_cylindrical_coordinates_2} with condition in $B_{\delta_{2}}(\xi(z_{0}))$ satisfying $\varphi_{\delta}^{\varepsilon}(0)\to\xi(z_{0})$ when $\varepsilon\to 0$. Consequently the equivalent systems \eqref{system_cylindrical_coordinates} and \eqref{system_regularised} also posses the limit cycle $\varphi_{\delta}^{\varepsilon}(t)$ satisfying such properties.

\smallskip

Observe that in the particular case treated in this paper, the regularized system \eqref{system_regularised} with $\varepsilon=0$ does not depend on $\delta$. Thus, neither the initial condition $\xi(z)$ and consequently nor $\varphi_{\delta}^{\varepsilon}$ depends on $\delta$. Moreover, once $\xi(z_{0})=(z_{0}+\textstyle\int_{0}^{\theta}h(\cos s,\sin s)ds,1)$ has the second component equal to one (what means $r=1$, in the cylindrical coordinates), the limit cycle $\varphi_{\delta}^{\varepsilon}(t)$ lives on the cylinder $\textsl{C}$, i.e., $\varphi_{\delta}^{\varepsilon}(t)$ bifurcates from the continuum of periodic solutions on $\textsl{C}$.

\smallskip

Finally, replacing the expressions of $p^{\pm}$, $q^{\pm}$ and $r^{\pm}$ given in \eqref{perturbations} into the expression \eqref{formula_principal}, we obtain the polynomial
\begin{equation}\label{formula}
M_{\delta}(z)=I_{s}(\delta)z^{s}+\ldots+I_{1}(\delta)z+I_{0}(\delta),
\end{equation}
where $s=\max\{m,n,p\}$ and
$$
I_{j}(\delta)=\displaystyle\int_{0}^{2\pi}\phi_{j}(\theta,\delta)d\theta\in\mathbb{R},
$$
for some $\phi_{j}$ depending on $\theta$ and $\delta$ with $j=0,1,\ldots,s$.

Therefore, since $M_{\delta}(z)$ is a polynomial in $z$ possessing at most $s$ zeros, then $s=\max\{m,n,p\}$ is a upper bound for the number of zeros of $M_{\delta}$. But consequently, by using Theorem \ref{teotool}, $s$ is also the upper bound for the number of limit cycles that can bifurcate from the cylinder of system \eqref{system_cylindrical_coordinates_2}. Then it follows that the same holds for the equivalent system \eqref{system_regularised}. This finish the prove of the Fundamental Lemma.
\end{proof}

\smallskip

In what follows we prove Theorem \ref{t1}.
\begin{proof}[Proof of Theorem \ref{t1}]
Suppose that $A_{h}(\theta)=0$ for all $\theta\in[0,2\pi)$ and that for $|\e|$ sufficiently small we have a value $z_{0}$ such that $M_{\delta}(z_{0})=0$ and $M_{\delta}'(z_{0})\neq0$, where $M_{\delta}$ is given in \eqref{formula_principal}. Then, by the Fundamental Lemma, there exists a limit cycle $\varphi_{\delta}^{\varepsilon}(t)$ for system \eqref{system_regularised} satisfying $\varphi_{\delta}^{\varepsilon}(0)\to\xi(z_{0})$ when $\varepsilon\to 0$, as described in the proof of the Fundamental Lemma. Now consider $\Sigma^{z_{0}}$ a transversal section of $\varphi_{\delta}^{\varepsilon}$ contained in the cylinder $\textsl{C}$ for the Poincaré map $P^{\varepsilon}_{\delta}:\Sigma^{z_{0}}\longrightarrow\Sigma^{z_{0}}$, where $P^{\varepsilon}_{\delta}(z)=X_{\delta}(2\pi,\xi(z),\varepsilon)$, $\varphi_{\delta}^{\varepsilon}(0)\in\Sigma^{z_{0}}$ and $X_{\delta}(t,X,\varepsilon)$ is a solution of the regularized system \eqref{system_regularised}. Then it follows that $P^{\varepsilon}_{\delta}(\varphi_{\delta}^{\varepsilon}(0))=\varphi_{\delta}^{\varepsilon}(0)$.

\smallskip

Consider also $P^{\varepsilon}:\Sigma^{z_{0}}\longrightarrow\Sigma^{z_{0}}$ the Poincaré map of the non-smooth system \eqref{system_noncontinuous} with $P^{\varepsilon}_{\delta}(z)=X(2\pi,\xi(z),\varepsilon)$. Observe that by taking $\varepsilon$ sufficiently small the Poincaré map $P^{\varepsilon}$ is a composition of Poincaré maps of the regularized system and it is well defined and continuous for every $z\in\Sigma^{z_{0}}$. Moreover, each fix point of $P^{\varepsilon}$ corresponds to a periodic solution of the non-smooth system \eqref{system_noncontinuous}. Then it holds that $\displaystyle\lim_{\delta\to 0}P^{\varepsilon}_{\delta}(z)=P^{\varepsilon}(z)$, i.e., $P^{\varepsilon}$ is the pointwise limit of $P^{\varepsilon}_{\delta}$.

\smallskip

Therefore the point $\varphi^{\varepsilon}(0)=\textstyle\lim_{\delta\to 0}\varphi_{\delta}^{\varepsilon}(0)$ is a fixed point of the Poincaré map $P^{\varepsilon}(z)$ and consequently the non-smooth system \eqref{system_noncontinuous} has a limit cycle $\varphi^{\varepsilon}(t)$ such that $\varphi^{\varepsilon}(0)\to(\xi(z_{0}),1)$ when $\varepsilon\to0$.
\end{proof}

Finally we prove Theorem \ref{t2}. It is an immediate consequence of the Fundamental Lemma.

\begin{proof}[Proof of Theorem \ref{t2}:]
Since $g^{+}=g^{-}$, we obtain $p^{+}=p^{-}$, $q^{+}=q^{-}$ and $r^{+}=r^{-}$. Thus, $A_{h}(\theta)=0$ for all $\theta\in[0,2\pi)$, from formula \eqref{formula_principal} we get
\begin{equation}\label{formulaa}
\overline{M}_{\delta}(z)=\displaystyle\int_{0}^{2\pi}-\left[h(\cos\theta,\sin\theta)(-\cos\theta q^{+}(\varsigma)+\sin\theta p^{+}(\varsigma)) +r^{+}(\varsigma)\right]ds,
\end{equation}
where $\varsigma=\left(\cos\theta,\sin\theta,z+\textstyle\int_{0}^{s}h(\cos v,\sin v)dv \right)$. In addition, replacing the expressions of $p^{+}$, $q^{+}$ and $r^{+}$ given in \eqref{perturbations}, we obtain a polynomial
\begin{equation}\label{formula}
\overline{M}_{\delta}(z)=\overline{I}_{s}(\delta)z_{0}^{s}+\ldots+\overline{I}_{1}(\delta)z_{0}+\overline{I}_{0}(\delta),
\end{equation}
where again $s=\max\{m,n,p\}$ and
$$ \overline{I}_{j}(\delta)=\displaystyle\int_{0}^{2\pi}\overline{\phi}_{j}(\theta,\delta)d\theta\in\mathbb{R},
$$
for some $\overline{\phi}_{j}$ depending on $\theta$ and $\delta$ with $j=0,1,\ldots,s$.

The proof of Theorem \ref{t2} is straightforward from the Fundamental Lemma.
\end{proof}

We should mention that the formula obtained in \eqref{formulaa} does not coincides precisely to the one presented in \cite{LT} due to a subtle technical mistake performed in that paper. However, it is important to note that such misunderstanding does not affects the content of that paper since the goal of the authors was to present the methodology for computing limit cycles that bifurcate from a continuum of periodic orbits forming a subset of $\mathbb{R}^{n}$.

\smallskip

Next we present a concrete example and some particular perturbations of it in order to discuss some points about the results. More specifically, we compare the results of Theorems \ref{t1} and \ref{t2}.

\subsection{Examples}\label{exemplos}

In this subsection we present some considerations \linebreak about the number of periodic solutions that can bifurcate from a special cylinder (more specifically, we fix a function $h$) taking into account smooth and non-smooth perturbations. We must note that obtaining a global result about the achievement of the number of periodic solutions from formula \eqref{formula_principal} in terms of $h(x,y)$ and the values $m$, $n$ and $p$ is a hard task. Besides, we show that usually it is not possible neither reach the bound presented in Theorems \ref{t1} and \ref{t2} nor make the respective bounds coincide.

\smallskip

First consider $h(x,y)=x/(\sqrt{x^2+y^2})$. Thus system \eqref{system_no_perturbation} is defined in $\mathbb{R}^{3}\setminus\{(0,0,z);z\in\mathbb{R}\}$. We stress out that this particular case was studied in \cite{LT} by considering smooth perturbations. Observe that this particular function $h$ is $C^{2}$ and its expression in cylindrical coordinates is $h(\theta)=\cos\theta$. Moreover, it satisfies $\textstyle\int_{0}^{2\pi}h(\theta)d\theta=0$ and it is not difficult to see that $A_{h}(\theta)=0$ for all $\theta\in[0,2\pi)$. Now consider the perturbations in \eqref{perturbations} with $m=2$, $n=p=0$ and $a^{\pm}_{ijk}=0$ for all $i,j,k\in\mathbb{N}$ satisfying $i+j+k\leq 1$. Namely,
\begin{equation}\label{perturbations2}
\begin{array}{rcl}
p^{\pm}(x,y,z)&=& a^{\pm}_{200}x^{2}+a^{\pm}_{020}y^{2}+a^{\pm}_{002}z^{2}+a^{\pm}_{110}xy+a^{\pm}_{101}xz+a^{\pm}_{011}yz,\vspace{0.2cm}\\
q^{\pm}(x,y,z)&=& b^{\pm}_{000},\vspace{0.2cm}\\
r^{\pm}(x,y,z)&=& c^{\pm}_{000}.
\end{array}
\end{equation}
By using formula \eqref{formula2} we obtain
$$
\begin{array}{rcl}
\overline{M}_{\delta}(z)&=&\dfrac{\pi}{4}(a^{+}_{101}+a^{+}_{110}+4b^{+}_{000}+8c^{+}_{000}).
\end{array}
$$
Then $\overline{M}_{\delta}$ has no zero if $a^{+}_{101}+a^{+}_{110}+4b^{+}_{000}+8c^{+}_{000}\neq 0$ and a continuum of zeros otherwise, and consequently Theorem \ref{t2} does not provide any periodic solution bifurcating from the cylinder $\textsl{C}$ for these particular cases of perturbations and function $h$. On the other hand, now we use formula \eqref{formula2}, which provides the periodic solutions bifurcating from $\textsl{C}$ via non-smooth perturbations. Nevertheless, note that formula \eqref{formula2} depends on the function $\varphi_{\delta}(\sin\theta)$, then we need to apply a careful approach. Indeed, we will study this case in two steps. First, assume that $\delta\geq1$ and observe that in this situation we obtain $|\sin\theta|/\delta\leq 1$. Then $\varphi_{\delta}(\sin\theta)=\sin\theta/\delta$ and from formula \eqref{formula2} we get
$$
\begin{array}{rl}
M_{\delta}(z)=&\dfrac{\pi}{8}(a^{+}_{101}+a^{+}_{110}+4b^{+}_{000}+8c^{+}_{000}+a^{-}_{101}+a^{-}_{110}+4b^{-}_{000}+8c^{-}_{000})\vspace{0.2cm}\\
&+\dfrac{\pi}{8\delta}(a^{+}_{101}-a^{-}_{101})z.
\end{array}
$$

Observe that considering $a^{+}_{101}-a^{-}_{101}\neq 0$, function $M_{\delta}$ has exactly one zero $z_{0}$, namely,
$$
z_{0}=-\dfrac{(a^{+}_{101}+a^{+}_{110}+4b^{+}_{000}+8c^{+}_{000}+a^{-}_{101}+a^{-}_{110}+4b^{-}_{000}+8c^{-}_{000})\delta}{a^{+}_{101}-a^{-}_{101}},
$$
and $z_{0}$ satisfies $M_{\delta}'(z_{0})=\frac{\pi}{8\delta}(a^{+}_{101}-a^{-}_{101})\neq 0$.

\smallskip

Now suppose that $\delta<1$ and consider $\theta_{\delta}\in(0,\pi/2)$ such that $\sin\theta_{\delta}=\delta$, i.e., $\theta_{\delta}=\arcsin\delta$. Note that in order to use formula \eqref{formula2}, we must split limit of integration of the integral in pieces, taking into account the expression of $\varphi_{\delta}(\sin\theta)$ as follows.
$$
\begin{array}{lcc}
\varphi_{\delta}(\sin\theta)=1,& \mbox{for}& \delta\leq \sin\theta\leq 1,\vspace{0.2cm}\\
\varphi_{\delta}(\sin\theta)=\sin\theta/\delta,&  \mbox{for}& -\delta< \sin\theta< \delta,\vspace{0.2cm}\\
\varphi_{\delta}(\sin\theta)=-1,&  \mbox{for}& -1\leq \sin\theta\leq -\delta.
\end{array}
$$

Hence, the expression of $M_{\delta}$ is obtained by performing integral \eqref{formula2} with $\theta$ ranging in the partition $\{0,\theta_{\delta},\pi-\theta_{\delta},\pi+\theta_{\delta},2\pi-\theta_{\delta},2\pi\}$ of the interval $[0,2\pi]$ (see Figure \ref{exemlos_grafico_phi}).

\begin{figure}[!h]
\begin{center}
\psfrag{a}{$\theta_{\delta}$}\psfrag{b}{$\pi-\theta_{\delta}$}\psfrag{c}{$\pi+\theta_{\delta}$}\psfrag{d}{$2\pi-\theta_{\delta}$}\psfrag{e}{$2\pi$}\psfrag{f}{$0$}\psfrag{g}{$1$}\psfrag{h}{$\delta$}\psfrag{i}{$-\delta$}\psfrag{j}{$-1$}\psfrag{k}{$\sin\theta$}\psfrag{l}{$\theta$}
 \epsfxsize=8.3cm
\epsfbox{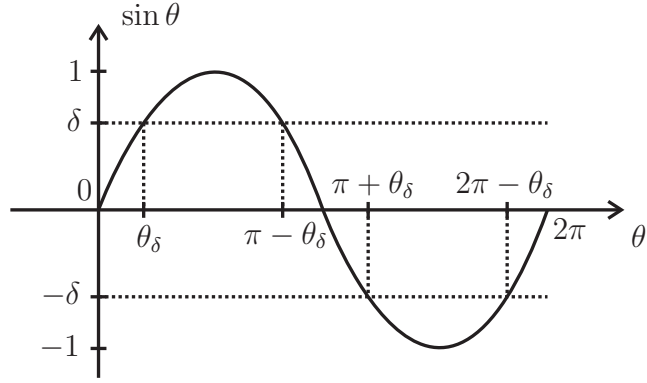} \caption{\small{Different intervals of integration of formula \eqref{formula}}.} \label{exemlos_grafico_phi}\end{center}
\end{figure}
Therefore, for $\delta<1$ we obtain the formula
$$
\begin{array}{rl}
M_{\delta}(z)=&\dfrac{\pi}{8}(a^{+}_{101}+a^{+}_{110}+4b^{+}_{000}+8c^{+}_{000}+a^{-}_{101}+a^{-}_{110}+4b^{-}_{000}+8c^{-}_{000})\vspace{0.2cm}\\
&-\dfrac{\delta\sqrt{1-\delta^{2}}(-5+2\delta^{2})-3\arcsin(\delta)}{12\delta}(a^{+}_{101}-a^{-}_{101})z,
\end{array}
$$
where we assume that $a^{+}_{101}-a^{-}_{101}\neq0$. In addition, one should note that the function $\Lambda(\delta)=\delta\sqrt{1-\delta^{2}}(-5+2\delta^{2})-3\arcsin(\delta)$ satisfies $\Lambda(0)=0$ and $\Lambda'(\delta)=-8(1-\delta^{2})^{3/2}<0$ for all $0<\delta<1$, i.e., $\Lambda(\delta)\neq0$ when $0<\delta<1$. Therefore function $M_{\delta}$ possesses the zero
$$
z_{0}=\dfrac{3\pi(a^{+}_{101}+a^{+}_{110}+4b^{+}_{000}+8c^{+}_{000}+a^{-}_{101}+a^{-}_{110}+4b^{-}_{000}+8c^{-}_{000})\delta}{2(\delta\sqrt{1-\delta^{2}}(-5+2\delta^{2})-3\arcsin(\delta))(a^{+}_{101}-a^{-}_{101})}.
$$

Moreover, it is easy to see that
$$
M_{\delta}'(z_{0})=-\dfrac{\delta\sqrt{1-\delta^{2}}(-5+2\delta^{2})-3\arcsin(\delta)}{12\delta}(a^{+}_{101}-a^{-}_{101})\neq 0,
$$
and consequently the Fundamental Lemma assures the existence of a limit cycle for system \eqref{system_regularised} considering the function $h(x,y)=x/$ $(\sqrt{x^2+y^2})$, perturbation \eqref{perturbations2} and $\varepsilon$ sufficiently small. However, by Theorem \ref{t1}, this periodic solution remains when $\delta$ tends to zero, in such sense that system \eqref{system_noncontinuous} has also a periodic solution. Indeed, when $\delta\to 0$ we achieve $\delta<1$ and then we get
$$
\begin{array}{rl}
M_{\delta}(z)=&\dfrac{\pi}{8}(a^{+}_{101}+a^{+}_{110}+4b^{+}_{000}+8c^{+}_{000}+a^{-}_{101}+a^{-}_{110}+4b^{-}_{000}+8c^{-}_{000})\vspace{0.2cm}\\
&+\dfrac{2}{3}(a^{+}_{101}-a^{-}_{101})z,
\end{array}
$$
Thus the periodic solution that emerge from the cylinder $\textsl{C}$ for system \eqref{system_noncontinuous} converges to the periodic solution with initial condition $(z_{0},1)\in\textsl{C}$ when $\varepsilon$ is sufficiently small, where
$$
z_{0}=-\dfrac{3\pi(a^{+}_{101}+a^{+}_{110}+4b^{+}_{000}+8c^{+}_{000}+a^{-}_{101}+a^{-}_{110}+4b^{-}_{000}+8c^{-}_{000})}{16(a^{+}_{101}-a^{-}_{101})}.
$$

It is easy to check that when $\delta=1$ both expressions of $M_{\delta}$ and $z_{0}$ coincides.
\begin{table}[!h]
            \begin{minipage}[b]{0.45\linewidth}
         \begin{tabular}{|>{\centering\arraybackslash}m{1.5cm} ||>{\centering\arraybackslash}m{1cm}|>{\centering\arraybackslash}m{1cm}|}
           \hline
                   \multicolumn{3}{|c|}{Table 1: Case $m=1$} \\
                   \hline
                   \hline
          \centering \textbf{{\backslashbox {$n$}{$p$}}} & \textbf{0} & \textbf{1}\\
         \hline
         \hline
           \textbf{0} & 0 & 1 \\
         \hline
         \textbf{1}  & 1 & 1\\
         \hline
         \end{tabular}
\end{minipage} \hfill
\begin{minipage}[b]{0.5\linewidth}
\begin{tabular}{|>{\centering\arraybackslash}m{1.5cm} ||>{\centering\arraybackslash}m{1.1cm}|>{\centering\arraybackslash}m{1cm}|>{\centering\arraybackslash}m{1cm}|}
  \hline
          \multicolumn{4}{|c|}{Table 2: Case $m=2$} \\
          \hline
          \hline
 \centering \textbf{{\backslashbox {$n$}{$p$}}} & \textbf{0} & \textbf{1} & \textbf{2}\\
\hline
\hline
  \textbf{0} & 0 (1)* & 1 & 2 \\
\hline
\textbf{1}  & 1 & 1 & 2\\
\hline
\textbf{2}  & 2 & 2 & 2\\
\hline
\end{tabular}
\end{minipage}

\vspace{0.5cm}

\begin{minipage}[b]{0.48\linewidth}
\begin{tabular}{|>{\centering\arraybackslash}m{1.5cm} ||>{\centering\arraybackslash}m{1cm}|>{\centering\arraybackslash}m{1cm}|>{\centering\arraybackslash}m{1cm}|>{\centering\arraybackslash}m{1cm}|}
  \hline
          \multicolumn{5}{|c|}{Table 3: Case $m=3$} \\
          \hline
          \hline
 \centering \textbf{{\backslashbox {$n$}{$p$}}} & \textbf{0} & \textbf{1} & \textbf{2} & \textbf{3}\\
\hline
\hline
  \textbf{0} & 1 (2) & 1 (2) & 2 & 3 \\
\hline
\textbf{1}  & 1 (2) & 1 (2) & 2 & 3\\
\hline
\textbf{2}  & 2 & 2 & 2 & 3\\
\hline
\textbf{3}  & 3 & 3 & 3 & 3\\
\hline
\end{tabular}
\end{minipage}
\vspace{0.4cm}

\caption{Upper bound for the number of limit cycles for particular values of $m$, $n$ and $p$ when $h(x,y)=x/\sqrt{x^2+y^2}$. The number between brackets indicates the upper bound for the non-smooth case, when it is different from the smooth one. The * indicates the case studied previously.}
\end{table}

In short, in the case where $h(x,y)=x/\sqrt{x^2+y^2}$ and the perturbations of system \eqref{system_no_perturbation} are given by \eqref{perturbations2}, we have one limit cycle by considering non-smooth perturbations and no one when we consider smooth ones. This emphasizes the importance of considering non-smooth perturbations. Also, it shows that although Theorems \ref{t1} and \ref{t2} provide the same upper bound for the number of limit cycles by using the Malkin's bifurcation function, the achievement of the number of periodic solutions in each case may be different. Finally, observe that in both cases, the upper bound $s=2=\max\{2,0,0\}$ is not reach.

\smallskip

It is not arduous to exhibit other examples where the number of limit cycles by considering non-smooth perturbations is greater than when we consider smooth ones, but the expressions of $M_{\delta}$ and mainly the zeros $z_{0}$ may become huge and we will not present here. Despite of it, we exhibit some tables indicating the upper bound for the number of limit cycles that can bifurcate from smooth and non-smooth perturbations for the case where $m\geq n,p$ and $a_{ijk}^{\pm}=0$ for all $i,j,k\leq\max\{m,n,p\}-1$, with $m=1,2,3$. This calculations were performed with the help of the algebraic manipulator Wolfram Mathematica.

\smallskip

Finally, we stress out that the same analysis can be performed by considering different expressions of the function $h(x,y)$, i.e., changing the arrangement of the periodic solutions on the cylinder $\textsl{C}$.

\smallskip

Indeed, by considering $h(x,y)=xy/(x^2+y^2)$, we achieve all the necessary suppositions about such function considering the same perturbations and cases of the previous discussion we obtain the following tables.
\begin{table}[!h]
            \begin{minipage}[b]{0.45\linewidth}
         \begin{tabular}{|>{\centering\arraybackslash}m{1.5cm} ||>{\centering\arraybackslash}m{1cm}|>{\centering\arraybackslash}m{1cm}|}
           \hline
                   \multicolumn{3}{|c|}{Table 4: Case $m=1$} \\
                   \hline
                   \hline
          \centering \textbf{{\backslashbox {$n$}{$p$}}} & \textbf{0} & \textbf{1}\\
         \hline
         \hline
           \textbf{0} & 0 & 1 \\
         \hline
         \textbf{1}  & 0 (1) & 1\\
         \hline
         \end{tabular}
\end{minipage} \hfill
\begin{minipage}[b]{0.5\linewidth}
\begin{tabular}{|>{\centering\arraybackslash}m{1.5cm} ||>{\centering\arraybackslash}m{1cm}|>{\centering\arraybackslash}m{1cm}|>{\centering\arraybackslash}m{1cm}|}
  \hline
          \multicolumn{4}{|c|}{Table 5: Case $m=2$} \\
          \hline
          \hline
 \centering \textbf{{\backslashbox {$n$}{$p$}}} & \textbf{0} & \textbf{1} & \textbf{2}\\
\hline
\hline
  \textbf{0} & 1 & 1 & 2 \\
\hline
\textbf{1}  & 1 & 1 & 2\\
\hline
\textbf{2}  & 1 (2) & 1 (2) & 2\\
\hline
\end{tabular}
\end{minipage}

\vspace{0.5cm}

\begin{minipage}[b]{0.5\linewidth}
\begin{tabular}{|>{\centering\arraybackslash}m{1.5cm} ||>{\centering\arraybackslash}m{1cm}|>{\centering\arraybackslash}m{1cm}|>{\centering\arraybackslash}m{1cm}|>{\centering\arraybackslash}m{1cm}|}
  \hline
          \multicolumn{5}{|c|}{Table 6: Case $m=3$} \\
          \hline
          \hline
 \centering \textbf{{\backslashbox {$n$}{$p$}}} & \textbf{0} & \textbf{1} & \textbf{2} & \textbf{3}\\
\hline
\hline
  \textbf{0} & 2 & 2 & 2 & 3 \\
\hline
\textbf{1}  & 2 & 2 & 2 & 3\\
\hline
\textbf{2}  & 2 & 2 & 2 & 3\\
\hline
\textbf{3}  & 2 (3) & 2 (3) & 2 (3) & 3\\
\hline
\end{tabular}
\end{minipage}

\vspace{0.4cm}

\caption{Upper bound for the number of limit cycles for particular values of $m$, $n$ and $p$ when $h(x,y)=xy/(x^2+y^2)$. The upper bound for the number of periodic solutions and the dependence of them in terms of $m$, $n$ and $p$ change according to function $h$.}
\end{table}

Comparing the tables for both expressions of $h(x,y)$ we can see that the bifurcation of periodic orbits depends on the shape of the periodic orbits on $\textsl{C}$. Nevertheless, again the upper bound for the number of periodic orbits when we perform non-smooth perturbations is greater than considering smooth perturbations.

\section*{Acknowledgments}

The first author is partially supported by the grants FP7 PEOPLE-2012-IRSES-318999 and CNPq-Brasil 478230/2013-3. The second author is supported by the FAPESP-BRAZIL grant 2010/18015-6. The third author is supported by the FAPESP-BRAZIL grant 2012/18780-0.

\end{document}